\newtheorem{lemma}{Lemma}
\newtheorem{theorem}{Theorem}
\newtheorem*{proposition}{Proposition}
\theoremstyle{definition}
\newtheorem{definition}{Definition}
\theoremstyle{remark}
\newtheorem{remark}{Remark}
\begin{document}
	
\author[1]{Moulay Rchid Sidi Ammi}

\author[2]{Delfim F. M. Torres}

\runningauthor{M. R. Sidi Ammi and D. F. M. Torres}

\affil[1]{Group of Mathematical and Numerical Analysis of PDEs and Applications (AMNEA),
Department of Mathematics, Moulay Ismail University,
Faculty of Science and Technology,
B.P. 509, Errachidia, Morocco,
e-mail: sidiammi@ua.pt}

\affil[2]{Center for Research and Development in Mathematics and Applications (CIDMA),
Department of Mathematics, University of Aveiro,
3810-193 Aveiro, Portugal,
e-mail: delfim@ua.pt}

\title{Analysis of fractional integro-differential equations of thermistor type}

\runningtitle{Analysis of fractional integro-differential equations}

\abstract{We survey methods and results of fractional 
differential equations in which an unknown function is under the operation
of integration and/or differentiation of fractional order. 
As an illustrative example, we review results on fractional integral
and differential equations of thermistor type. 
Several nonlocal problems are considered: with Riemann--Liouville,
Caputo, and time-scale fractional operators.
Existence and uniqueness of positive solutions are obtained
through suitable fixed-point theorems in proper Banach spaces.
Additionally, existence and continuation theorems are given,
ensuring global existence.}

\keywords{integral and differential equations;
fractional operators; nonlocal thermistor problems;
time scales; dynamic equations; positive solutions;
local and global existence; fixed point theorems.}

\classification[MSC 2010]{26A33; 26E70; 35A01; 35B09; 45M20; 58J20.}

\maketitle


\section{Introduction}

Fractional calculus covers a wide range of classical fields 
in mathematics and its applications, such as Abel's integral 
equation, viscoelasticity, analysis of feedback amplifiers, 
capacitor theory, fractances, electric conductance, 
mathematical biology and optimal control \cite{MR3787674}. 
In particular, Abel integral equations are well studied, 
with many sources devoted to its applications in different fields.
One can say that Abel's integral equations, of first and second kind,
are the most celebrated integral equations of fractional order
\cite{MyID:310}. The former investigations on such equations are due 
to Niels Henrik Abel himself, for the first kind \cite{MR3721889}, 
and to Hille and Tamarkin for the second kind \cite{MR1502959}.
Niels Henrik Abel was led to his equations studying
the \emph{tautochrone} problem (from the Greek prefixes \emph{tauto},
meaning same, and \emph{chrono}, meaning time), that is, 
the problem of determining the shape of a curve for which 
the time taken by an object sliding without friction in uniform gravity 
to its lowest point is independent of its starting point. 
The curve is a cycloid, simultaneously the \emph{tautochrone} 
and the \emph{brachistochrone} curve, which brings together 
the subjects of fractional calculus and the calculus of variations 
\cite{MyID:310,MR3331286}.

Fractional integral equations occur in many situations 
where physical measurements are to be evaluated, e.g., 
in evaluation of spectroscopic measurements 
of cylindrical gas discharges, the study of the solar 
or a planetary atmosphere, the investigation of star densities in
a globular cluster, the inversion of travel times of seismic waves for determination
of terrestrial sub-surface structure, and spherical stereology \cite{ref33,MR2024583}.
For detailed descriptions and analysis of such integral equations of fractional order,
we refer the readers to the books of Gorenflo and Vessella \cite{21}
and Craig and Brown \cite{31}. See also \cite{32}. 
Another field, in which fractional integral equations with 
general weakly singular kernels appear naturally,
is inverse boundary value problems in partial differential equations, 
in particular parabolic ones \cite{GorMin}. Here we are mainly
interested in questions involving existence and uniqueness of solutions
of fractional integral equations.

Existence and uniqueness of solutions for FDEs 
have been intensely studied by many mathematicians
\cite{hilfer,srivastava2,22,29}. However, most available
results have been concerned with existence-uniqueness of solutions for FDEs
on a finite interval. Since continuation theorems for FDEs are not well developed,
results about global existence-uniqueness of solution of FDEs on the half axis
$[0,+\infty )$, by using directly the results from local existence,
have only recently flourished \cite{2,li}. Here we address such issues.
To motivate our study, we can mention two types of electrical circuits, 
which are related with fractional calculus. Circuits of the first type 
consist of capacitors and resistors, which are described by conventional 
(integer-order) models, but for which the circuit itself, as a whole, 
may have non-integer order properties, becoming a fractance device, that is, 
an electrical element that exhibits fractional order impedance properties. 
Circuits of the second type may consist of resistors and capacitors, 
both modeled in the classical sense, and fractances.
In particular, we can consider thermistor-type problems, which are highly
nonlinear and mathematically challenging \cite{MR2397904}.

Inspired by modern developments of thermistors,
where fractional partial differential equations have a crucial role,
we consider here mathematical models and tools that serve as prototypes
for other integral problems. It turns out that available computational methods
for such mathematical problems are not theoretically sound, in the sense
they rely on results of local existence. Here we review the recent theory
of global existence for nonlocal fractional problems of thermistor type
\cite{sidiammi3,sidiammi1,MyID:347,MyID:365}. We begin, in 
Section~\ref{sec:History}, with an historical account of the 
theory of FDEs dealing with Cauchy-type problems and their reduction 
to Volterra integral equations. Section~\ref{section2}
contains some of the main tools in the area: we recall a necessary
and sufficient condition for a subspace of continuous functions
to be precompact; Schauder's fixed point theorem;
and a useful generalization of Gronwall's inequality.
In Section~\ref{sec:RL}, our main concern is existence 
and uniqueness of solution to a fractional order 
nonlocal Riemann--Liouville thermistor problem of the form
\begin{equation}
\label{eq21}
\begin{gathered}
D^{2 \alpha} u(t) = \frac{\lambda f(u(t))}{\left(
\int_{0}^{T} f(u(x))\, dx\right)^{2}}+ h(t) \, , \quad  t \in  (0, T)  \, , \\
I^{\beta} u(t)|_{t=0} = 0 ,  \quad    \forall \beta \in  (0,1],
\end{gathered}
\end{equation}
under suitable conditions on $f$ and $h$ (see Theorem~\ref{thm11}).
We also establish the boundedness of $u$ (Theorem~\ref{thm:bnd}).
Here constant $\lambda$ is a positive dimensionless real parameter.
The unknown function $u$ may be interpreted
as the temperature generated by an electric current flowing
through a conductor \cite{lac2,tza}.
We assume that $T$ is a fixed positive real and $ \alpha > 0$
a parameter describing the order of the FD.
In the case $\alpha =1$ and $h \equiv 0$, \eqref{eq21} becomes
the one-dimensional nonlocal steady state thermistor problem;
the values of $0< \alpha <\frac{1}{2}$ correspond to intermediate processes.
In Section~\ref{sec:Cap}, a more general Caputo thermistor problem
\eqref{13} on the half axis $[0,+\infty )$ is considered, instead
of the bounded interval $[0, T)$ of \eqref{eq21}.
One of the main difficulties lies in handling the nonlocal term
$$
\frac{\lambda f(t, u(t))}{\left(\int_{0}^{t}f(x, u(x))\, dx\right)^{2}}
$$
of problem \eqref{13}, where, in contrast with problem \eqref{eq21},
function $f$ depends on both time and the unknown function $u$.
We are concerned with local existence on a finite interval
(Theorem~\ref{thm3.1}), as well as results of continuation
(Theorem~\ref{thm4.1}) and global existence
(Theorems~\ref{thm5.2} and \ref{thm5.2b})
via Schauder's fixed point theorem.
Finally, in Section~\ref{sec:ts}, we consider fractional
integral and differential equations on time scales, that is,
on arbitrary nonempty closed subsets of the real numbers.
The investigation of dynamic equations on time scales allows
to unify and extend the theories of difference and differential
equations into a single theory \cite{MR1062633}. A time scale 
is a model of time, and the theory has found important applications in several
contexts that require simultaneous modeling of discrete and continuous data. Its
usefulness appears in many different areas, and the reader interested in applications
is refereed to \cite{agarwal1b,agarwal2,bohner1,MR2671876,MR3498485}
and references therein. The idea to join the two subjects of FC
and the calculus on time scales, on a single theory,
was born with the works \cite{MyID:152,MyID:179,MyID:201}
and is under strong current research since 2011: see, e.g.,
\cite{ben,MyID:328,MyID:358,MR3498485,MyID:365}.
Using Schauder's fixed point theorem,
we obtain existence and uniqueness results of positive solutions
for a fractional Riemann--Liouville nonlocal thermistor
problem on arbitrary nonempty closed subsets of the real numbers
(Theorems~\ref{thm12} and \ref{corollary2}).
We end with Section~\ref{sec:conc} of conclusions.


\section{Historical account}
\label{sec:History}

Abel's integral equations of first and second kinds 
can be formulated, respectively, as
\begin{equation}
\label{eq:1.1}
f(x)=\int_0^x\frac{k(x,s)g(s)}{(x-s)^\alpha}ds,
\quad 0<\alpha<1, \quad 0 \leq x \leq b,
\end{equation}
and
\begin{equation}
\label{eq:1.1:2nd}
f(x)=\int_0^x\frac{k(x,s)g(s)}{(x-s)^\alpha}ds + g(x),
\quad 0<\alpha<1, \quad 0 \leq x \leq b,
\end{equation}
where $g$ is the unknown function to be found, $f$ is a well behaved function,
and $k$ is the kernel. These celebrated equations 
appear frequently in many physical and engineering problems, like semi-conductors,
heat conduction, metallurgy and chemical reactions \cite{32,21}.
The special case $\alpha = 1/2$ arises often.
If the kernel is given by $k(x,s)=\frac{1}{\Gamma(1-\alpha)}$, 
then \eqref{eq:1.1} is a fractional integral
equation of order $1-\alpha$ \cite{MyID:310}. This problem is a generalization
of the tautochrone problem, and is related with the born of the 
fractional calculus of variations \cite{MyID:404}.
For solving integral equations \eqref{eq:1.1}--\eqref{eq:1.1:2nd} 
of Abel type, several approaches are possible,
e.g., using transformation techniques \cite{r3:5}, 
orthogonal polynomials \cite{r3:6}, integral operators \cite{r3:7}, 
fractional calculus \cite{r3:8,r3:9}, Bessel functions \cite{r3:10}, 
wavelets \cite{r3:12}, methods based on semigroups \cite{r3:13,r3:14}, 
as well as many other techniques \cite{r3:16,r3:17}.

FDEs, in which an unknown function is contained under 
the operation of a FD, have 
a long history, enriched by the intensive development 
of the theory of fractional calculus and their applications 
in the last decades. Fractional ordinary differential equations 
have the following form:
\begin{equation*}
F\left(x, y(x), D^{\alpha_{1}}y(x), D^{\alpha_{2}}y(x), 
\ldots, D^{\alpha_{n}}y(x)\right)=g(x),
\end{equation*}
where $F(x, y_{1}, y_{2}, \ldots, y_{n})$ and $g(x)$ are given functions,  
$D^{\alpha_{k}}$ are fractional differentiation operators of real order
$\alpha_{k} > 0$, $k=1, 2, \ldots, n$ \cite{128}. For example, 
one can consider nonlinear differential equations of the form
\begin{equation}
\label{2}
D^{\alpha}y(x)=f(x, y(x))
\end{equation}
with real $\alpha >0$ or complex $\alpha$, $Re(\alpha) >0$. 
Similarly to the investigation of ordinary differential equations, 
the methods for FDEs are essentially based on the study of equivalent 
Volterra integral equations. Equations \eqref{eq:1.1}--\eqref{eq:1.1:2nd}
are examples of Volterra integral equations of first and second kinds, 
respectively. Several authors have developed methods 
to deal with fractional integro-differential equations 
and construct solutions for ordinary and partial differential equations,
with the general goal of obtaining a unified theory of special functions \cite{78,128}. 
In \cite{GorMin}, analytical solutions to some linear operators of fractional 
integration and fractional differentiation are obtained,
using Laplace transforms. The 1918 note of 
O'Shaughnessy and Post \cite{116},
is one of the first references to develop a method 
for solving the equation
\begin{equation*}
(D^{\frac{1}{2}}y)(x)= \frac{y}{x}
\end{equation*}
with the Riemann--Liouville derivative. Fifteen years later,
in 1933, Fujiwara considered the FDE
\begin{equation*}
(D_{+}^{\alpha}y)(x)=\left(\frac{\alpha}{x}\right)^{\alpha} y(x)
\end{equation*}
using the Hadamard FD of order $\alpha >0$ \cite{49}.
Provided $f(x, y)$ is bounded in a special domain and satisfies a Lipschitz 
condition with respect to $y$, 
Pitcher and Sewell have shown in 1938 how the nonlinear FDE 
\begin{equation}
\label{12}
(D_{a+}^{\alpha}y)(x)=f(x, y(x)), \quad 
0 < \alpha < 1, \quad a \in \mathbb{R},
\end{equation}
in the sense of Riemann--Liouville, can be reduced
to a Volterra integral equation, proving existence and uniqueness 
of a continuous solution to \eqref{12} \cite{117}.
In 1965, Al-Bassam considered the nonlinear Cauchy-type problem 
of fractional order
\begin{equation}
\label{13bis}
\begin{gathered}
(D_{a+}^{\alpha}y)(x)=f(x, y(x)), \quad 0 < \alpha < 1, \quad a \in \mathbb{R},\\
(D_{a+}^{\alpha-1}y)(x)/(x=a) 
= (I_{a+}^{1-\alpha}y)(x)/(x=a) =b_{1},\,\, b_{1} \in \mathbb{R}.
\end{gathered}
\end{equation}
Similarly as before, he applied the method of successive approximations 
to the equivalent reduced Volterra integral equation and the contraction 
mapping method, establishing existence of a unique solution \cite{3}. 
In 1978, Al-Abedeen and Arora considered the problem
\begin{equation*}
(D_{a+}^{\alpha}y)(x)=f(x, y(x)), 
\quad (I_{a+}^{1-\alpha}y)(c)=y_{0},
\quad a<c<b, \quad y_{0} \in  \mathbb{R},
\end{equation*}
with $0 < \alpha \leq 1$, and proved an existence and uniqueness result 
for the corresponding Volterra nonlinear integral equation \cite{al2}.
On the basis of Picard method and Schauder's fixed point theorem, 
Tazali obtained in 1982 two local existence results of 
a continuous solution to \eqref{13bis} \cite{133}.
Interestingly, more general existence and uniqueness results than the ones of \cite{133},
also obtained using a fixed point theorem and equivalent nonlinear integral formulations, 
have been published in 1977 by Leskovski\u\i \cite{83}. 
Similar results, on the basis of fixed-point theorems 
and integral equations, were derived by Semenchuk in 1982 \cite{132}.
In 1988 \cite{37}, El-Sayed examined problem \eqref{2} on a finite interval where the 
FD is considered in the sense of Gelfand and Shilov \cite{51}.
Hadid, in 1995, used a fixed point theorem to prove existence of a solution 
to the corresponding integral equation of \eqref{13bis} \cite{60}.
Using the contraction mapping method on a complete metric space, 
Hayek et al. established in 1999 existence and uniqueness of a continuous solution 
to the Cauchy-type problem described by the following system of FDEs:
\begin{equation*}
(D_{0+}^{\alpha}y)(x)=f(x, y(x)), 
\quad y(a)=b, \quad 0 < \alpha \leq 1, 
\quad a>0, \quad b\in \mathbb{R}^{n}
\end{equation*}
\cite{65}. In 2000, Kilbas, Bonilla and Trujillo studied the Cauchy-type problem
\begin{gather*}
(D_{a+}^{\alpha}y)(x)=f(x, y(x)), 
\quad n-1 < \alpha \leq n, \quad n=-[-\alpha],\\
(D_{a+}^{\alpha-k)}y)(x)/(x=a)  =b_{k},
\quad b_{k} \in \mathbb{R}, \quad k=1, 2, \ldots, n,
\end{gather*}
with complex $\alpha$, $\alpha \in \mathbb{C}$ with $Re(\alpha)>0$, 
on a finite interval $[a, b]$ \cite{69}. By using the method of 
successive expansions and Avery--Henderson and Leggett--Williams 
multiple fixed point theorems on cones, they proved existence of multiple positive solutions 
for the corresponding nonlinear Volterra integral equation \cite{69}. 
See also \cite{Trujillo}. It should be noted, however, that
in some function spaces the proof of equivalence between solutions of Cauchy-type problems 
for FDEs and corresponding reduced Volterra integral equations constitute a major difficulty
\cite{MR3557873}. For more recent results, we refer the reader to \cite{MR2962045}.
The techniques are, however, similar to the ones already mentioned,
being detailed in what follows.


\section{Fundamental results}
\label{section2}

Let $C([0,T])$ be the space of all continuous functions on $[0,T]$.
The following three auxiliary lemmas are particularly useful for our purposes.

\begin{lemma}[See \cite{li}]
\label{lem2.2}
Let $M$ be a subset of $C([0,T])$. Then $M$ is precompact
if and only if the following conditions  hold:
\begin{enumerate}
\item $\{u(t):u \in M\}$ is uniformly bounded,
		
\item $\{u(t):u \in M\}$ is equicontinuous on $[0,T]$.
\end{enumerate}
\end{lemma}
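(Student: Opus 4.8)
This is the classical Arzel\`a--Ascoli theorem for $C([0,T])$, so I would prove the two implications in turn, reading ``precompact'' as ``$\overline{M}$ compact'' and using that $\bigl(C([0,T]),\|\cdot\|_\infty\bigr)$ is complete, so that precompactness of $M$ is equivalent to total boundedness of $M$.

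For necessity I would argue that a compact subset of a metric space is bounded, giving (1) at once, and totally bounded, so for a given $\varepsilon>0$ one can cover $M$ by $\varepsilon/3$-balls about finitely many $u_1,\dots,u_n\in M$. Each $u_i$ is uniformly continuous on the compact interval $[0,T]$; taking $\delta$ to be the minimum of the resulting moduli of continuity and inserting the nearest $u_i$ into a three-term triangle inequality yields $|u(t)-u(s)|<\varepsilon$ whenever $|t-s|<\delta$, uniformly in $u\in M$, which is exactly (2).

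For sufficiency --- the substantive direction --- I would show directly that $M$ is totally bounded. Given $\varepsilon>0$, equicontinuity furnishes a \emph{single} $\delta>0$ good for all members of $M$; I then fix a partition $0=t_0<\cdots<t_m=T$ of mesh below $\delta$ and consider the evaluation map $u\mapsto(u(t_0),\dots,u(t_m))\in\mathbb{R}^{m+1}$, whose image is bounded by (1) and hence totally bounded. Pulling back a finite $\varepsilon/3$-net at the nodes and again interpolating, via $|u(t)-u_j(t)|\le|u(t)-u(t_k)|+|u(t_k)-u_j(t_k)|+|u_j(t_k)-u_j(t)|$ for a node $t_k$ within $\delta$ of $t$, produces a finite $\varepsilon$-net for $M$ in the sup norm. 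Completeness of $C([0,T])$ then upgrades ``totally bounded'' to ``relatively compact''. Equivalently, one may extract a subsequence converging at every point of $\mathbb{Q}\cap[0,T]$ by a diagonal argument and use equicontinuity to make it uniformly Cauchy.

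The main obstacle is this last step of the sufficiency argument: producing one modulus of continuity valid for the whole family --- which is precisely where equicontinuity, as opposed to continuity of each individual member, is indispensable --- and then transferring control from the finitely many nodes to all of $[0,T]$ uniformly. In the subsequential formulation the same difficulty reappears as the task of promoting pointwise convergence on a countable dense set to uniform convergence; the limit then automatically lies in $C([0,T])$ by the uniform limit theorem, though not necessarily in $M$ itself.
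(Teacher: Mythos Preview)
Your argument is correct: it is the standard Arzel\`a--Ascoli theorem, and both directions are handled accurately, with the finite $\varepsilon$-net construction (or the equivalent diagonal extraction) supplying the nontrivial sufficiency step.

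The paper, however, does not prove this lemma at all. It is stated as a classical preliminary with a citation (``See \cite{li}'') and no accompanying proof; it functions purely as a tool for later compactness arguments. So there is no route to compare against: your proposal supplies a complete self-contained proof where the paper simply invokes the result from the literature. If anything, your write-up exceeds what the paper requires, since the authors only ever use the sufficiency direction (to conclude precompactness from uniform boundedness plus equicontinuity in the Schauder fixed-point arguments).
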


\begin{lemma}[Schauder's fixed point theorem \cite{cronin}]
\label{lem2.3 }
Let $U$ be a closed bounded convex subset of a Banach space $X$. If
$T:U\to U$ is completely continuous, then $T$ has a fixed point in $U$.
\end{lemma}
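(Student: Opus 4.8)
The statement is the classical Schauder fixed point theorem, so the plan is the standard one: approximate the completely continuous map $T$ by finite-dimensional continuous maps, solve each of the resulting finite-dimensional problems with Brouwer's fixed point theorem, and recover a genuine fixed point of $T$ by a limiting argument. I would take Brouwer's theorem --- every continuous self-map of a compact convex subset of a finite-dimensional normed space has a fixed point --- as known. The starting observation is that, since $T$ is completely continuous and $U$ is bounded, $K:=\overline{T(U)}$ is compact, and $K\subseteq U$ because $T(U)\subseteq U$ and $U$ is closed.

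Next I would construct the Schauder projections. Fix $n\in\mathbb{N}$ and choose a finite $\tfrac1n$-net $y_1,\dots,y_{k_n}\in K$ for $K$. Using the continuous ``tent'' weights $\mu_i(y)=\max\{0,\ \tfrac1n-\|y-y_i\|\}$, define
\begin{equation*}
P_n(y)=\frac{\sum_{i=1}^{k_n}\mu_i(y)\,y_i}{\sum_{i=1}^{k_n}\mu_i(y)},\qquad y\in K .
\end{equation*}
The denominator never vanishes on $K$, so $P_n\colon K\to\operatorname{conv}\{y_1,\dots,y_{k_n}\}$ is well defined and continuous, and since $P_n(y)$ is a convex combination of those $y_i$ that lie within $\tfrac1n$ of $y$, one has $\|P_n(y)-y\|<\tfrac1n$ for every $y\in K$.

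The set $C_n:=\operatorname{conv}\{y_1,\dots,y_{k_n}\}$ is a compact convex subset of the finite-dimensional space $\operatorname{span}\{y_1,\dots,y_{k_n}\}$, and $C_n\subseteq U$ by convexity of $U$. The composition $P_n\circ T$ maps $C_n$ continuously into $C_n$, so Brouwer's theorem yields a point $x_n\in C_n$ with $P_n(T(x_n))=x_n$, hence
\begin{equation*}
\|x_n-T(x_n)\|=\bigl\|P_n(T(x_n))-T(x_n)\bigr\|<\tfrac1n .
\end{equation*}
The points $T(x_n)$ all lie in the compact set $K$, so $T(x_{n_j})\to z$ along some subsequence, with $z\in K\subseteq U$; the displayed estimate then forces $x_{n_j}\to z$, and continuity of $T$ gives $T(x_{n_j})\to T(z)$, so that $z=T(z)$ with $z\in U$, which is the desired conclusion.

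The step I expect to be the main obstacle is the construction of the projections $P_n$ together with the reduction that makes Brouwer's theorem applicable: one has to check that these approximating maps confine the fixed-point problem to a finite-dimensional compact convex set that still sits inside $U$. Once that is in place, the passage to the limit is a routine compactness-and-continuity argument. (One could alternatively conclude by a Cantor-type intersection argument on the tail sets $\overline{\{x_m:m\ge n\}}$, but extracting a convergent subsequence of $\{T(x_n)\}$ is cleaner.)
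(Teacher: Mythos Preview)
Your proof is correct and follows the standard route via Schauder projections and Brouwer's theorem. However, the paper does not prove this lemma at all: it is stated as a classical result and simply cited from Cronin's textbook \cite{cronin}, so there is no paper proof to compare against. Your argument would serve perfectly well as a self-contained justification, but in the context of the paper it is unnecessary---the lemma is treated as a known tool from the literature.
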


We also recall the following version of Gronwall's lemma.

\begin{lemma}[Generalized Gronwall's inequality \cite{15,35}]
\label{lem5.1}
Let $v:[0,b]\to [0,+\infty )$ be a real function and $w(\cdot)$
be a nonnegative, locally integrable function on $[0,b]$.
Suppose that there exist $a>0$ and $0<\alpha <1$ such that
\begin{equation*}
v(t)\leq w(t)+a\int_0^{t}\frac{v(s)}{(t-s)^{\alpha }}ds.
\end{equation*}
Then there exists a constant $k=k(\alpha )$ such that
\begin{equation*}
v(t)\leq w(t)+ka\int_0^{t}\frac{w(s)}{(t-s)^{\alpha }}ds
\end{equation*}
for $t\in [0,b]$.
\end{lemma}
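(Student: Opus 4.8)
The plan is to regard the hypothesis as the linear integral inequality $v\le w+Bv$ on $[0,b]$ for the positive linear operator
\[
(B\phi)(t):=a\int_0^t\frac{\phi(s)}{(t-s)^{\alpha}}\,ds ,
\]
and then to iterate it. Since $B$ is monotone (if $\phi\le\psi$ on $[0,b]$ then $B\phi\le B\psi$), substituting the inequality into itself $n$ times gives
\[
v(t)\;\le\;\sum_{k=0}^{n-1}(B^{k}w)(t)+(B^{n}v)(t),\qquad t\in[0,b],
\]
with $B^{0}=\mathrm{Id}$. Two things then have to be checked: that the partial sums $\sum_{k\ge0}(B^{k}w)(t)$ converge, and that the remainder $(B^{n}v)(t)\to0$ as $n\to\infty$; the desired estimate falls out in the limit.

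The engine of both is an explicit formula for the iterated kernel. Writing $\gamma:=1-\alpha\in(0,1)$, so that $(B\phi)(t)=a\int_0^t (t-s)^{\gamma-1}\phi(s)\,ds$, I would exchange the order of integration (legitimate by Tonelli's theorem, all integrands being nonnegative) and use the Beta-integral identity
\[
\int_{s}^{t}(t-\tau)^{\gamma-1}(\tau-s)^{m\gamma-1}\,d\tau=\frac{\Gamma(\gamma)\,\Gamma(m\gamma)}{\Gamma\big((m+1)\gamma\big)}\,(t-s)^{(m+1)\gamma-1}
\]
to prove, by induction on $n$, that
\[
(B^{n}\phi)(t)=\frac{\big(a\,\Gamma(\gamma)\big)^{n}}{\Gamma(n\gamma)}\int_0^t (t-s)^{n\gamma-1}\,\phi(s)\,ds .
\]
Since $0\le t-s\le b$ and $n\gamma-1=-\alpha+(n-1)\gamma$, one has $(t-s)^{n\gamma-1}\le b^{(n-1)\gamma}(t-s)^{-\alpha}$, so for every $n\ge1$,
\[
(B^{n}w)(t)\;\le\;\frac{\big(a\,\Gamma(\gamma)\big)^{n}\,b^{(n-1)\gamma}}{\Gamma(n\gamma)}\int_0^t\frac{w(s)}{(t-s)^{\alpha}}\,ds ,
\]
and likewise with $v$ in place of $w$. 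As $\Gamma(n\gamma)$ grows faster than any geometric sequence, the scalar series $\sum_{n\ge1}\big(a\Gamma(\gamma)\big)^{n}b^{(n-1)\gamma}/\Gamma(n\gamma)$ converges; denote its sum by $ka$ (thus $k$ depends on $\alpha$ and, through the fixed data, on $a$ and $b$). Summing the $w$-estimate over $n$ term by term, which is permissible because all terms are nonnegative, yields $\sum_{n\ge1}(B^{n}w)(t)\le ka\int_0^t (t-s)^{-\alpha}w(s)\,ds<\infty$, so the partial sums converge; and the $v$-estimate shows that $(B^{n}v)(t)$ is dominated by the general term of the same convergent series times the fixed quantity $\int_0^t (t-s)^{-\alpha}v(s)\,ds$, whence $(B^{n}v)(t)\to0$.

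Letting $n\to\infty$ in the iterated inequality then gives
\[
v(t)\;\le\;\sum_{n=0}^{\infty}(B^{n}w)(t)\;=\;w(t)+\sum_{n=1}^{\infty}(B^{n}w)(t)\;\le\;w(t)+ka\int_0^{t}\frac{w(s)}{(t-s)^{\alpha}}\,ds ,
\]
which is the assertion. The step I expect to demand the most care is the treatment of the remainder $(B^{n}v)(t)$: for the bound above to be meaningful one needs $\int_0^t (t-s)^{-\alpha}v(s)\,ds$ to be finite, i.e. one must invoke that $v$ is (locally) integrable on $[0,b]$ — an assumption tacitly built into the very statement of the hypothesis, since otherwise the right-hand side of the assumed inequality is not even defined. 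Granting that, the Beta-function induction and the convergence of the Gamma-function series are routine by comparison. One may also note that the constant $k$ is, up to the factor $a$, essentially a value of a two-parameter Mittag--Leffler function evaluated at $a\Gamma(1-\alpha)\,b^{1-\alpha}$, which makes the dependence on the length of the interval transparent.
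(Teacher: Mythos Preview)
The paper does not prove this lemma; it is quoted verbatim from the literature (references \cite{15,35}) and used later as a tool. Your argument is essentially the standard proof due to Ye, Gao and Ding \cite{35}: iterate the inequality using monotonicity of the integral operator, identify the iterated kernels via the Beta integral to obtain the closed form $(B^n\phi)(t)=\dfrac{(a\Gamma(1-\alpha))^n}{\Gamma(n(1-\alpha))}\displaystyle\int_0^t (t-s)^{n(1-\alpha)-1}\phi(s)\,ds$, and then sum the resulting Mittag--Leffler-type series while showing the remainder $(B^n v)(t)$ tends to zero. The argument is correct, including your observation that finiteness of $\int_0^t (t-s)^{-\alpha}v(s)\,ds$ is already implicit in the hypothesis. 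Your closing remark that the constant $k$ actually depends on $a$ and $b$ as well as on $\alpha$ is also accurate; the notation $k=k(\alpha)$ in the stated lemma is a mild imprecision inherited from the cited sources, harmless for the applications made of it in Section~\ref{section53}.
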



\section{Nonlocal Riemann--Liouville problem}
\label{sec:RL}

Let $0< \alpha <\frac{1}{2}$ and
$X = \left( C([0,T]) , \| \cdot \| \right)$.
For $ x \in C([0,T])$, define the norm
$$
\| x \| =  \sup_{t\in [0,T]} \{ e^{-Nt} |x(t)| \},
$$
which is equivalent to the standard supremum norm for $f \in
C([0,T])$ \cite{sayed}. The use of this norm is technical and
allows us to simplify the integral calculus. By $L^{1}([0,T],
\mathbb{R})$, we denote the set of Lebesgue integrable functions on
$[0,T]$. We consider problem \eqref{eq21} with the following assumptions:
\begin{itemize}

\item[$(H_1)$] \ $f: \mathbb{R}^{+}\rightarrow \mathbb{R}^{+}$ is a Lipschitz continuous
function with Lipschitz constant $L_{f}$ such that $c_{1} \leq f(u) \leq c_{2}$,
with $c_{1}, c_{2}$ two positive constants;

\item[$(H_2)$] \ $h$ is continuous on $(0, T)$ with $h \in L^{\infty}(0, T)$.

\end{itemize}

Our first result asserts existence of a unique solution
to \eqref{eq21} on $C(\mathbb{R}^{+})$ of form
\begin{equation}
\label{eq41}
\begin{split}
u(t) &= I^{2\alpha} \left\{\frac{\lambda f(u)}{\left(
\int_{0}^{T} f(u)\,dx\right)^{2}}+ h(t) \right\}\\
&= \int_0^t\frac{(t-s)^{2 \alpha-1}}{\Gamma(2\alpha)}
\left\{\frac{\lambda f(u)}{\left( \int_{0}^{T} f(u)\, dx\right)^{2}}
+ h(s) \right\}  ds.
\end{split}
\end{equation}


\subsection{Existence and Uniqueness}

We begin proving equivalence between \eqref{eq21} and \eqref{eq41}
on the space $C(\mathbb{R}^{+})$. This restriction of the space of functions
allows to exclude from the proof a stationary function
with Riemann--Liouville derivative of order $2\alpha$ equal
to $d \cdot t^{2\alpha-1}$, $d\in \mathbb{R}$, which
belongs to the space $C_{1-2\alpha}[0, T]$
of continuous weighted functions.

\begin{lemma}
\label{m:lema1}
Suppose that $\alpha \in (0, \frac{1}{2})$. Then
the nonlocal problem \eqref{eq21} is equivalent to the integral
equation \eqref{eq41} on the space $C(\mathbb{R}^{+})$.
\end{lemma}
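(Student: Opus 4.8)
The plan is to prove the two implications separately, in each direction reducing everything to the standard composition rules for the Riemann--Liouville operators $I^{2\alpha}$ and $D^{2\alpha}$, and using the boundary condition in \eqref{eq21} (or, equivalently, membership in $C(\mathbb{R}^{+})$) to eliminate the homogeneous term.

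First I would record a preliminary observation used in both directions: by $(H_1)$ one has $c_{1}T \le \int_{0}^{T} f(u(x))\,dx \le c_{2}T$, so the denominator in the nonlocal term is a strictly positive constant, and the whole right-hand side of \eqref{eq21},
$$g(t) := \frac{\lambda f(u(t))}{\left(\int_{0}^{T} f(u(x))\,dx\right)^{2}} + h(t),$$
is, by $(H_1)$ and $(H_2)$, a bounded measurable function on $(0,T)$, hence in $L^{\infty}(0,T) \subset L^{1}(0,T)$. This is what makes the manipulations below legitimate.

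For the implication \eqref{eq21} $\Rightarrow$ \eqref{eq41}, I would apply $I^{2\alpha}$ to $D^{2\alpha}u = g$ and invoke the identity
$$I^{2\alpha} D^{2\alpha} u(t) = u(t) - \frac{(I^{1-2\alpha}u)(0^{+})}{\Gamma(2\alpha)}\, t^{2\alpha-1},$$
valid for $0 < 2\alpha < 1$ when $I^{1-2\alpha}u$ is absolutely continuous, to get $u(t) = I^{2\alpha}g(t) + c\, t^{2\alpha-1}$ with $c = (I^{1-2\alpha}u)(0^{+})/\Gamma(2\alpha)$. The condition $I^{\beta} u|_{t=0} = 0$ for every $\beta \in (0,1]$ applies in particular to $\beta = 1-2\alpha$, forcing $c = 0$; alternatively, $t^{2\alpha-1}$ is unbounded as $t \to 0^{+}$, so $u \in C(\mathbb{R}^{+})$ already forces $c=0$ and simultaneously rules out the stationary function $d\cdot t^{2\alpha-1} \in C_{1-2\alpha}[0,T]$. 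Either way \eqref{eq41} follows. For the converse, starting from $u = I^{2\alpha}g$ I would apply $D^{2\alpha}$ and use $D^{2\alpha} I^{2\alpha} g = g$ (valid since $g \in L^{1}$) to recover the differential equation in \eqref{eq21}, and then, for the boundary condition, use the semigroup law $I^{\beta} u = I^{\beta} I^{2\alpha} g = I^{\beta+2\alpha} g$ together with the crude estimate $|I^{\beta+2\alpha}g(t)| \le \|g\|_{L^{\infty}}\, t^{\beta+2\alpha}/\Gamma(\beta+2\alpha+1) \to 0$ as $t \to 0^{+}$.

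The step needing the most care — the main obstacle — is justifying the composition formula in the forward direction: $I^{2\alpha}D^{2\alpha}u = u - (\text{boundary term})$ is not automatic from $u$ being merely continuous, but requires $I^{1-2\alpha}u$ to be absolutely continuous on $[0,T]$. This is not really a gap here, since being a solution of \eqref{eq21} means precisely that $D^{2\alpha}u = \tfrac{d}{dt}I^{1-2\alpha}u$ exists and equals the bounded function $g$, which identifies $I^{1-2\alpha}u$ with an absolutely continuous antiderivative of $g$; but it is the point one must check, and it is exactly why the lemma is phrased on $C(\mathbb{R}^{+})$ — a choice that discards the weighted-continuous homogeneous solutions and thereby turns the reduction into an honest equivalence.
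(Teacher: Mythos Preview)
The paper does not actually supply a proof of this lemma: it is stated without a \texttt{proof} environment, with the only justification being the remark preceding it that restricting to $C(\mathbb{R}^{+})$ ``allows to exclude from the proof a stationary function with Riemann--Liouville derivative of order $2\alpha$ equal to $d\cdot t^{2\alpha-1}$.'' Your argument is correct and is exactly the standard one the authors have in mind: it matches that remark about killing the homogeneous term $d\,t^{2\alpha-1}$, and it parallels the proof the paper \emph{does} give for the analogous time-scale lemma in Section~\ref{sec:ts}, which simply invokes the composition identity $^{\mathbb{T}}I^{2\alpha}\circ{}^{\mathbb{T}}D^{2\alpha}u=u$ (Proposition~\ref{prop:I:D}) under the vanishing initial condition. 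So there is nothing to compare beyond noting that you have filled in what the paper leaves implicit, and done so with more care about the absolute-continuity hypothesis than the survey itself exercises.
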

\begin{theorem}
\label{thm11}
Let $f$ and $h$ satisfy hypotheses $(H_1)$ and $(H_2)$.
Then there exists a unique solution $u \in X $ of \eqref{eq21} for
all $0 < \lambda < \frac{N^{2 \alpha}}{L_{f}
\left(\frac{1}{\left(c_{1}T\right)^{2}}
+ \frac{2c_{2}^{2}T}{\left(c_{1}T\right)^{4}} e^{NT}\right)}$.
\end{theorem}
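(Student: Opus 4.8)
The plan is to use the equivalence provided by Lemma~\ref{m:lema1}: it suffices to exhibit a unique $u\in X$ solving the integral equation \eqref{eq41}. Reading the right-hand side of \eqref{eq41} as an operator, I would define $\Phi:X\to X$ by
\begin{equation*}
(\Phi u)(t)=\int_0^t\frac{(t-s)^{2\alpha-1}}{\Gamma(2\alpha)}\left\{\frac{\lambda f(u(s))}{\left(\int_0^T f(u(x))\,dx\right)^{2}}+h(s)\right\}ds ,
\end{equation*}
so that the problem becomes the fixed-point equation $u=\Phi u$. Assumption $(H_1)$ forces $\int_0^T f(u(x))\,dx\ge c_1T>0$, so the bracketed term is bounded (using also $h\in L^\infty(0,T)$ from $(H_2)$), and since $2\alpha-1>-1$ the kernel $(t-s)^{2\alpha-1}$ is integrable on $(0,t)$; hence $\Phi u$ is well defined and, by the usual continuity of Riemann--Liouville integrals, $\Phi u\in C([0,T])=X$. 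The strategy is then to prove that $\Phi$ is a contraction on the Banach space $\left(X,\|\cdot\|\right)$ and to invoke the Banach contraction principle, which simultaneously yields existence and uniqueness. The weighted norm $\|x\|=\sup_{t\in[0,T]}e^{-Nt}|x(t)|$ is tailored to this, via the elementary estimate $\int_0^t(t-s)^{2\alpha-1}e^{Ns}\,ds=e^{Nt}\int_0^t\tau^{2\alpha-1}e^{-N\tau}\,d\tau\le e^{Nt}\,\Gamma(2\alpha)\,N^{-2\alpha}$, which (after $1\le e^{Ns}$) also controls $\int_0^t(t-s)^{2\alpha-1}\,ds$.

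For $u,v\in X$ the $h$-terms cancel, leaving
\begin{equation*}
(\Phi u)(t)-(\Phi v)(t)=\frac{\lambda}{\Gamma(2\alpha)}\int_0^t(t-s)^{2\alpha-1}\left(\frac{f(u(s))}{A_u^{2}}-\frac{f(v(s))}{A_v^{2}}\right)ds ,
\end{equation*}
where $A_u=\int_0^T f(u(x))\,dx$ and $A_v=\int_0^T f(v(x))\,dx$. I would then split
\begin{equation*}
\frac{f(u(s))}{A_u^{2}}-\frac{f(v(s))}{A_v^{2}}=\frac{f(u(s))-f(v(s))}{A_u^{2}}+f(v(s))\,\frac{(A_v-A_u)(A_v+A_u)}{A_u^{2}A_v^{2}} ,
\end{equation*}
and bound each summand with the tools from $(H_1)$: the Lipschitz estimate $|f(u(s))-f(v(s))|\le L_f|u(s)-v(s)|$, the two-sided bounds $c_1T\le A_u,A_v\le c_2T$ (whence $A_u^{2}\ge(c_1T)^{2}$, $A_u^{2}A_v^{2}\ge(c_1T)^{4}$, $A_v+A_u\le 2c_2T$), $f(v(s))\le c_2$, and $|A_v-A_u|\le L_f\int_0^T|u(x)-v(x)|\,dx$. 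This produces the pointwise estimate
\begin{equation*}
\left|\frac{f(u(s))}{A_u^{2}}-\frac{f(v(s))}{A_v^{2}}\right|\le L_f\left(\frac{|u(s)-v(s)|}{(c_1T)^{2}}+\frac{2c_2^{2}T}{(c_1T)^{4}}\int_0^T|u(x)-v(x)|\,dx\right).
\end{equation*}

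Inserting this into the expression for $(\Phi u)(t)-(\Phi v)(t)$, bounding $|u(s)-v(s)|\le e^{Ns}\|u-v\|$ and the $L^1$-distance $\int_0^T|u(x)-v(x)|\,dx\le e^{NT}\|u-v\|$ (legitimate, e.g., once we fix $N\ge1$, which we may do), applying the kernel bound above to the $e^{Ns}$ term and—after $1\le e^{Ns}$—to the constant-in-$s$ term, and finally multiplying by $e^{-Nt}$ and taking the supremum over $t\in[0,T]$, I expect to reach
\begin{equation*}
\|\Phi u-\Phi v\|\le \frac{\lambda L_f}{N^{2\alpha}}\left(\frac{1}{(c_1T)^{2}}+\frac{2c_2^{2}T}{(c_1T)^{4}}e^{NT}\right)\|u-v\| .
\end{equation*}
The hypothesis on $\lambda$ is exactly the condition making the constant on the right strictly less than $1$; hence $\Phi$ is a contraction on $X$, it has a unique fixed point there, and by Lemma~\ref{m:lema1} this fixed point is the unique solution $u\in X$ of \eqref{eq21}.

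The step I expect to be the main obstacle is the contraction estimate for the \emph{nonlocal} term, that is, controlling the difference $A_u^{-2}-A_v^{-2}$ of the reciprocal squared integral denominators; the lower bound $c_1>0$ in $(H_1)$ is indispensable here to keep $A_u,A_v$ away from zero, and it is precisely this term that generates the factor $\frac{2c_2^{2}T}{(c_1T)^{4}}e^{NT}$ in the admissible range of $\lambda$. Once the pointwise bound is in hand, the remaining manipulations with the singular kernel are routine thanks to the Bielecki-type weighted norm.
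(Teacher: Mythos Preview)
Your proposal is correct and follows essentially the same route as the paper: define the fixed-point operator $F$ (your $\Phi$) via \eqref{eq41}, show it is a contraction on $(X,\|\cdot\|)$ for $\lambda$ in the stated range, apply Banach's principle, and conclude via Lemma~\ref{m:lema1}. The paper's own proof is only a sketch (``for a well chosen $\lambda>0$ we can prove that the map $F:X\to X$ is a contraction''), whereas you have supplied the actual contraction estimate, including the algebraic splitting of $A_u^{-2}-A_v^{-2}$ and the use of the Bielecki norm to extract the factor $N^{-2\alpha}$; these are precisely the computations underlying the explicit $\lambda$-bound in the statement.
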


\begin{proof}
Let $F : X  \to  X $ be defined by
\[
Fu= I^{2 \alpha} \left\{ \frac{\lambda f(u)}{\left(
\int_{0}^{T} f(u)\, dx\right)^{2}}+ h(t) \right\}.
\]
For a well Chosen  $\lambda >0$  we can prove that the map $ F : X \to X $ is a
contraction and it has a fixed point $u=Fu$. Hence, there exists a
unique $u \in X$ that is the solution to the integral equation
\eqref{eq41}. The result follows from Lemma~\ref{m:lema1}.
\end{proof}


\subsection{Boundedness}

We now show that the assumption that electrical conductivity $f(u)$ is bounded
(hypothesis $(H_1)$) allows to assert boundedness of $u$.

\begin{theorem}
\label{thm:bnd} Under hypotheses $(H_1)$ and $(H_2)$ and $\lambda
> 0$, if $u$ is the solution of \eqref{eq41}, then
\[
\|u \| \leq  \frac{\left(
\frac{\lambda}{(c_{1}T)^{2}}f(0)+h_{\infty}\right)}{N^{2 \alpha}}
e^{\frac{\lambda L_{f}}{(c_{1}TN^{\alpha})^{2}}}.
\]
\end{theorem}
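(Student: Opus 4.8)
The plan is to turn the fixed-point identity \eqref{eq41} into a linear integral inequality for the weighted quantity $\psi(t):=e^{-Nt}|u(t)|$ and then to close it by the generalized Gronwall inequality of Lemma~\ref{lem5.1}. First I would take absolute values in \eqref{eq41}. Hypothesis $(H_1)$ supplies the two facts needed for the nonlocal term: the lower bound $f\ge c_1$ gives $\bigl(\int_0^T f(u)\,dx\bigr)^2\ge (c_1T)^2$, while Lipschitz continuity from $0$ gives $f(u(s))\le f(0)+L_f|u(s)|$; together with $|h(s)|\le h_\infty$ a.e.\ from $(H_2)$ this yields
\[
|u(t)|\le \int_0^t\frac{(t-s)^{2\alpha-1}}{\Gamma(2\alpha)}\left(\frac{\lambda f(0)}{(c_1T)^2}+h_\infty+\frac{\lambda L_f}{(c_1T)^2}\,|u(s)|\right)ds .
\]

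Next I would bring in the equivalent norm. Multiplying by $e^{-Nt}$ and writing $e^{-Nt}=e^{-N(t-s)}e^{-Ns}$ under the integral, I can use $e^{-Ns}\le 1$ for the constant part. The one genuine computation is the convolution integral
\[
\int_0^t\frac{(t-s)^{2\alpha-1}}{\Gamma(2\alpha)}\,e^{-N(t-s)}\,ds=\int_0^t\frac{\tau^{2\alpha-1}}{\Gamma(2\alpha)}\,e^{-N\tau}\,d\tau\le\int_0^\infty\frac{\tau^{2\alpha-1}}{\Gamma(2\alpha)}\,e^{-N\tau}\,d\tau=\frac1{N^{2\alpha}},
\]
a Gamma-function identity valid since $2\alpha>0$; this is precisely the step that manufactures the factor $N^{-2\alpha}$ in the statement. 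Keeping $\psi$ inside the Lipschitz term, this gives
\[
\psi(t)\le \frac{1}{N^{2\alpha}}\Bigl(\tfrac{\lambda f(0)}{(c_1T)^2}+h_\infty\Bigr)+\frac{\lambda L_f}{(c_1T)^2}\int_0^t\frac{(t-s)^{2\alpha-1}}{\Gamma(2\alpha)}\,e^{-N(t-s)}\psi(s)\,ds .
\]

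This inequality is of the form handled by Lemma~\ref{lem5.1} (with singular exponent $1-2\alpha\in(0,1)$, which is why the restriction $\alpha<\tfrac12$ matters), so applying the generalized Gronwall lemma to $\psi$ and then taking the supremum over $t\in[0,T]$ delivers an estimate of the asserted form, the exponential factor $e^{\lambda L_f/(c_1TN^\alpha)^2}$ arising from the Gronwall iteration. Equivalently, one may bound $\psi(s)\le\|u\|$ directly to get $\|u\|\le \tfrac1{N^{2\alpha}}\bigl(\tfrac{\lambda f(0)}{(c_1T)^2}+h_\infty\bigr)+\tfrac{\lambda L_f}{(c_1TN^\alpha)^2}\|u\|$ and then rearrange, using that $\lambda$ lies in the existence range of Theorem~\ref{thm11} so the coefficient of $\|u\|$ is strictly less than one; a standard elementary estimate then puts the result in the exponential form of the statement.

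The main obstacle is the bookkeeping in the weighted-norm step: one must split $e^{-Nt}$ multiplicatively and not pull $\|u\|$ out of the integral prematurely, because it is the interplay of the convolution structure with the exponential weight that lets Lemma~\ref{lem5.1} be applied with the clean constants $N^{-2\alpha}$ and $\lambda L_f/(c_1TN^\alpha)^2$. Everything else — the Gamma integral, the use of $c_1\le f$ and the Lipschitz bound, $|h|\le h_\infty$, and passing to the supremum — is routine.
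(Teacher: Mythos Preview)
The paper does not actually prove Theorem~\ref{thm:bnd}: immediately after the statement it simply directs the reader to \cite{sidiammi1} for details, so there is no in-text argument to compare against. That said, your outline follows precisely the machinery the paper has set up --- the weighted norm $\|x\|=\sup_{t}e^{-Nt}|x(t)|$, hypotheses $(H_1)$--$(H_2)$, and the Gamma identity $\int_0^\infty \tau^{2\alpha-1}e^{-N\tau}\,d\tau/\Gamma(2\alpha)=N^{-2\alpha}$ --- and is the natural way to obtain a bound of this type. The passage from \eqref{eq41} to the linear integral inequality for $\psi(t)=e^{-Nt}|u(t)|$, and the alternate route of bounding $\psi(s)\le\|u\|$ inside the integral to reach
\[
\|u\|\;\le\;\frac{1}{N^{2\alpha}}\Bigl(\tfrac{\lambda f(0)}{(c_1T)^2}+h_\infty\Bigr)+\frac{\lambda L_f}{(c_1TN^\alpha)^2}\,\|u\|,
\]
are both correct.

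There is, however, a genuine gap at the last step. Neither of your two closures actually delivers the \emph{exponential} factor in the stated bound. Lemma~\ref{lem5.1}, as formulated in the paper, returns $v(t)\le w(t)+ka\int_0^t(t-s)^{-(1-2\alpha)}w(s)\,ds$, which for constant $w$ is linear in $a$, not of the form $w\,e^{a}$; so ``the exponential factor arising from the Gronwall iteration'' is not what Lemma~\ref{lem5.1} produces. Your second route yields, after rearranging, $\|u\|\le \dfrac{C/N^{2\alpha}}{1-\lambda L_f/(c_1TN^\alpha)^2}$, and since $\tfrac{1}{1-x}>e^{x}$ for every $x\in(0,1)$, there is no ``standard elementary estimate'' that converts this into the sharper $e^{x}$ bound of the statement. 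In short, your strategy is sound and reaches the right inequality up to the final constant; only the claimed passage to the exact exponential form is unjustified, and matching that constant would require consulting the argument in \cite{sidiammi1} rather than the tools reproduced in this survey.
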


For more details on the subject see \cite{sidiammi1}.


\section{Nonlocal Caputo thermistor problem}
\label{sec:Cap}

Now, our main goal consists to prove global existence of solutions
for a fractional Caputo nonlocal thermistor problem.
Precisely, we consider the following fractional order initial value problem:
\begin{equation}
\label{13}
\begin{gathered}
^{C}D^{2 \alpha}_{0+} u(t) = \frac{\lambda f(t, u(t))}{\left(
\int_{0}^{t}f(x, u(x))\, dx\right)^{2}} \, ,
\quad  t \in  (0, \infty)  \, , \\
u(t)|_{t=0}=u_0,
\end{gathered}
\end{equation}
where $^{C}D^{2 \alpha}_{0+}$ is the fractional Caputo derivative
operator of order $2 \alpha$ with $0 < \alpha < \frac{1}{2}$ a real parameter.
We shall assume the following hypotheses:
\begin{itemize}
\item[$(H_1)$] \ $f: \mathbb{R}^{+} \times \mathbb{R}^{+}\rightarrow \mathbb{R}^{+}$
is a Lipschitz continuous function with Lipschitz constant $L_{f}$ with respect
to the second variable such that $c_{1} \leq f(s,u) \leq c_{2}$
with $c_{1}$ and $c_{2}$ two positive constants;

\item[$(H_2)$] \ there exists a positive constant $M$ such that $f(s, u) \leq M s^{2}$;

\item[$(H_3)$]  \ $|f(s, u)- f(s, v) | \leq  s^{2} |u-v|$ or, in a more general manner,
there exists a constant $\omega \geq 2$ such that $|f(s,u)- f(s,v) | \leq  s^{\omega} |u-v|$.
\end{itemize}


\subsection{Local existence theorem}
\label{section33}

In this subsection, a local existence theorem of solutions for \eqref{13}
is obtained by applying Schauder's fixed point theorem. In order
to transform \eqref{13} into a fixed point problem, we give in the
following lemma an equivalent integral form of \eqref{13}.

\begin{lemma}
\label{lem2.1}
Suppose that $(H_1)$--$(H_3)$ holds. Then the
initial value problem \eqref{13} is equivalent to
\begin{equation}
\label{23}
u(t)=u_0+\frac{\lambda}{\Gamma (2\alpha )}\int_0^{t}(t-s)^{2\alpha-1}
\frac{f(s, u(s))}{\left( \int_{0}^{t}f(x, u)\,  dx\right)^{2}}  ds.
\end{equation}
\end{lemma}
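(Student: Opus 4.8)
The plan is to establish the equivalence in Lemma~\ref{lem2.1} by the standard two-way argument connecting a Caputo initial value problem with its associated Volterra integral equation, being careful about the subtleties the nonlocal denominator introduces. First I would show that any continuous solution $u$ of \eqref{13} satisfies \eqref{23}. Fix such a $u$; by $(H_1)$ the map $s \mapsto f(s,u(s))$ is continuous and bounded between $c_1$ and $c_2$, so $\int_0^t f(x,u(x))\,dx \geq c_1 t > 0$ for $t > 0$ and the right-hand side of \eqref{13} is a well-defined continuous function of $t$ on $(0,\infty)$; moreover $(H_2)$ gives $f(s,u(s)) \leq M s^2$, which forces the right-hand side to behave well near $t=0$ (the numerator vanishes like $s^2$ while the denominator is $\gtrsim t^2$, so the quotient stays bounded near the origin). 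Hence $^{C}D^{2\alpha}_{0+} u$ is continuous on $[0,\infty)$, and I would apply the fractional integral operator $I^{2\alpha}_{0+}$ to both sides of \eqref{13}, using the fundamental composition identity $I^{2\alpha}_{0+}\,{}^{C}D^{2\alpha}_{0+} u(t) = u(t) - u(0)$ valid for $2\alpha \in (0,1)$ and $u$ having a continuous Caputo derivative, together with the initial condition $u(0)=u_0$, to arrive at \eqref{23}.

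For the converse, I would let $u \in C([0,\infty))$ satisfy \eqref{23} and check that it solves \eqref{13}. Evaluating \eqref{23} at $t=0$ gives $u(0)=u_0$ since the integral term vanishes (again using the $s^{2\alpha-1}$ integrability together with boundedness of the integrand away from the $t=0$ denominator issue, handled via $(H_2)$). Then I would apply $^{C}D^{2\alpha}_{0+}$ to both sides. Since the integrand $g(s) := \lambda f(s,u(s)) / \big(\int_0^t f(x,u)\,dx\big)^2$ — note the inner integral runs to $t$, not $s$ — the $t$-dependence is twofold, so I must be slightly careful: rewrite \eqref{23} as $u(t) = u_0 + \big(\int_0^t f(x,u)\,dx\big)^{-2}\cdot \frac{\lambda}{\Gamma(2\alpha)}\int_0^t (t-s)^{2\alpha-1} f(s,u(s))\,ds$ only after isolating that the denominator does not depend on the inner variable $s$ and can be pulled out of the $s$-integral, so that $u(t) - u_0 = D(t)^{-2}\, I^{2\alpha}_{0+}\big(\lambda f(\cdot,u(\cdot))\big)(t)$ where $D(t) = \int_0^t f(x,u(x))\,dx$. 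Applying ${}^{C}D^{2\alpha}_{0+} = D^{2\alpha}_{0+}(\,\cdot - u_0)$ and using $D^{2\alpha}_{0+} I^{2\alpha}_{0+} = \mathrm{id}$ on continuous functions recovers \eqref{13}, provided the right-hand side is regular enough for the Caputo derivative to exist — which is ensured by continuity of $f(\cdot,u(\cdot))$ and positivity of $D(t)$ for $t>0$.

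The main obstacle I anticipate is the behavior at $t = 0$: both the direct and converse directions require that the nonlocal quotient $\lambda f(t,u(t))/D(t)^2$ extends continuously (or at least stays integrable against $(t-s)^{2\alpha-1}$) down to the origin, where $D(t) \to 0$. This is exactly the role of hypothesis $(H_2)$, $f(s,u) \leq M s^2$: it gives $f(t,u(t)) \leq M t^2$ while $D(t) = \int_0^t f(x,u)\,dx \geq c_1 t$, so $\lambda f(t,u(t))/D(t)^2 \leq \lambda M t^2 / (c_1 t)^2 = \lambda M / c_1^2$, a bound uniform near $t=0$. I would make this estimate explicit early on, since it legitimizes every interchange of $I^{2\alpha}_{0+}$ and ${}^{C}D^{2\alpha}_{0+}$ used above and shows the integral in \eqref{23} converges; the rest is then the routine application of the composition rules for Caputo derivatives and Riemann--Liouville integrals found in the standard references cited in Section~\ref{sec:History}.
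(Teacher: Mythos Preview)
Your overall strategy---reduce the equivalence to the standard composition identities $I^{2\alpha}_{0+}\,{}^{C}D^{2\alpha}_{0+}u = u - u(0)$ and $D^{2\alpha}_{0+}I^{2\alpha}_{0+} = \mathrm{id}$---matches what the paper has in mind (its own proof is simply ``a simple exercise''), and your observation that $(H_2)$ is what keeps $\lambda f(t,u(t))/D(t)^{2}$ bounded near $t=0$ is exactly right and worth recording.

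However, your converse direction contains a genuine error. Once you factor the denominator out and write $u(t)-u_0 = D(t)^{-2}\,I^{2\alpha}_{0+}\big(\lambda f(\cdot,u(\cdot))\big)(t)$, you then ``apply ${}^{C}D^{2\alpha}_{0+}$ and use $D^{2\alpha}_{0+}I^{2\alpha}_{0+}=\mathrm{id}$'' to recover \eqref{13}. That step implicitly treats the fractional derivative as if it commuted with multiplication by the $t$-dependent factor $D(t)^{-2}$, i.e.\ as if a Leibniz/product rule of the form ${}^{C}D^{2\alpha}_{0+}\big(g(t)\,I^{2\alpha}_{0+}h(t)\big) = g(t)\,h(t)$ held. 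It does not: Riemann--Liouville and Caputo operators are nonlocal and do not satisfy such a product rule, so this computation does not yield $\lambda f(t,u(t))/D(t)^{2}$.

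What this really reveals is that the upper limit of the inner integral in \eqref{23} should be $s$, not $t$: the integral form obtained by applying $I^{2\alpha}_{0+}$ to \eqref{13} is
\[
u(t)=u_0+\frac{\lambda}{\Gamma(2\alpha)}\int_0^{t}(t-s)^{2\alpha-1}\,
\frac{f(s,u(s))}{\big(\int_0^{s}f(x,u(x))\,dx\big)^{2}}\,ds,
\]
and with this (intended) version both directions follow cleanly from the composition identities with no factoring needed. Your forward direction already lands on this form; for the converse, simply apply $D^{2\alpha}_{0+}$ directly to $I^{2\alpha}_{0+}\big(\lambda f(\cdot,u(\cdot))/D(\cdot)^{2}\big)$ rather than pulling anything outside the integral.
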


\begin{proof}
It is a simple exercise to see that $u$ is a solution of the integral equation \eqref{23}
if and only if it is also a solution of the IVP \eqref{13}.
\end{proof}

\begin{theorem}
\label{thm3.1}
Suppose that conditions $(H_1)$--$(H_3)$ are verified.
Then, \eqref{13} has at least one
solution $u\in C[0,h]$ for some $T\geq h>0$.
\end{theorem}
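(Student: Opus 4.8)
The plan is to recast \eqref{23} as a fixed point problem $u = \Phi u$ on a suitable closed, bounded, convex subset of $C[0,h]$ and then invoke Schauder's fixed point theorem (Lemma~\ref{lem2.3 }). First I would fix a radius $R>0$ (for instance $R = |u_0| + 1$) and set $U = \{\, u \in C[0,h] : \|u - u_0\|_\infty \leq R\,\}$, which is closed, bounded and convex in the Banach space $(C[0,h],\|\cdot\|_\infty)$. On $U$ define
\[
(\Phi u)(t) = u_0 + \frac{\lambda}{\Gamma(2\alpha)} \int_0^t (t-s)^{2\alpha-1}
\frac{f(s,u(s))}{\left(\int_0^t f(x,u(x))\,dx\right)^2}\, ds .
\]
The first task is to show that, for $h>0$ chosen small enough, $\Phi$ maps $U$ into $U$. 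Here hypothesis $(H_1)$ is what makes the nonlocal denominator harmless: since $f(x,u(x)) \geq c_1 > 0$, we have $\int_0^t f(x,u(x))\,dx \geq c_1 t$, so the denominator is bounded below by $(c_1 t)^2$, while the numerator is bounded above by $c_2$. Hence the integrand is dominated by $c_2 (t-s)^{2\alpha-1}/(c_1 s)^2 \cdot$ (constants), but to avoid the non-integrable $s^{-2}$ singularity at the origin I would instead keep the denominator as $\left(\int_0^t f\right)^2$ and estimate $|(\Phi u)(t) - u_0| \leq \frac{\lambda c_2}{\Gamma(2\alpha)(c_1 t)^2}\int_0^t (t-s)^{2\alpha-1}\,ds = \frac{\lambda c_2}{\Gamma(2\alpha+1) c_1^2}\, t^{2\alpha - 2}$; since $2\alpha - 2 < 0$ this blows up as $t\to 0$, which signals that one must be more careful. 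The correct bound uses $(H_2)$: because $f(s,u) \leq M s^2$, we get $\int_0^t f(x,u(x))\,dx \leq \frac{M}{3} t^3$ is the wrong direction — rather, combining the lower bound $\int_0^t f \geq c_1 t$ with the numerator bound $f(s,u(s)) \leq M s^2 \leq M t^2$ on $[0,t]$ gives integrand $\leq \lambda M t^2 (t-s)^{2\alpha-1} / (\Gamma(2\alpha)(c_1 t)^2)$, so $|(\Phi u)(t) - u_0| \leq \frac{\lambda M}{\Gamma(2\alpha+1)c_1^2}\, t^{2\alpha}$, which does tend to $0$ as $t \to 0$. Thus for $h$ small enough, $\|\Phi u - u_0\|_\infty \leq \frac{\lambda M h^{2\alpha}}{\Gamma(2\alpha+1)c_1^2} \leq R$, giving $\Phi(U) \subseteq U$.

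Next I would verify that $\Phi$ is completely continuous on $U$, i.e.\ continuous and with precompact image; by Lemma~\ref{lem2.2} the latter reduces to uniform boundedness (already established: $\|\Phi u\|_\infty \leq |u_0| + R$) and equicontinuity. For equicontinuity, take $0 \leq t_1 < t_2 \leq h$ and split $(\Phi u)(t_2) - (\Phi u)(t_1)$ into the usual two pieces: the difference of the kernels $\int_0^{t_1}\big[(t_2 - s)^{2\alpha-1} - (t_1 - s)^{2\alpha-1}\big](\cdots)\,ds$ and the tail $\int_{t_1}^{t_2}(t_2-s)^{2\alpha-1}(\cdots)\,ds$, plus the contribution from the change in the denominator $\int_0^{t_2} f$ versus $\int_0^{t_1} f$. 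Using the uniform bounds from $(H_1)$–$(H_2)$ on the integrand factors, each piece is controlled by an expression that tends to $0$ as $t_2 - t_1 \to 0$, uniformly in $u \in U$ — the kernel differences are handled by the standard estimate for the Riemann–Liouville/Abel kernel, and the denominator difference by $\big|(\int_0^{t_2}f)^{-2} - (\int_0^{t_1}f)^{-2}\big| \leq$ const $\cdot |t_2 - t_1|$ since $\int_0^{t}f$ is Lipschitz in $t$ with values bounded below by $c_1 t$. Continuity of $\Phi$ follows similarly: if $u_n \to u$ uniformly on $[0,h]$, then $f(s,u_n(s)) \to f(s,u(s))$ uniformly by the Lipschitz property $(H_1)$, both numerator and denominator converge uniformly (the denominator staying bounded away from $0$), and dominated convergence finishes the argument.

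The main obstacle, and the only genuinely delicate point, is the behaviour of the nonlocal denominator $\left(\int_0^t f(x,u(x))\,dx\right)^2$ near $t = 0$, where it vanishes: a naive estimate produces the non-integrable singularity $t^{2\alpha-2}$ and fails. The resolution is precisely the interplay between $(H_1)$ and $(H_2)$ — the lower bound $f \geq c_1$ controls the denominator from below by $(c_1 t)^2$, while the growth bound $f(s,u) \leq M s^2$ supplies a matching factor $t^2$ in the numerator, so the singularity cancels and one is left with the benign factor $t^{2\alpha}$. Once this is in hand, everything else is the routine Schauder machinery. Applying Lemma~\ref{lem2.3 } to $\Phi: U \to U$ yields a fixed point $u \in U \subseteq C[0,h]$, which by Lemma~\ref{lem2.1} is a solution of \eqref{13} on $[0,h]$. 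This proves the theorem.
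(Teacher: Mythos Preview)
Your approach is essentially the same as the paper's: define the integral operator on a ball $\{u:\|u-u_0\|\le b\}$ in $C[0,h]$, use the lower bound $\int_0^t f\ge c_1 t$ from $(H_1)$ together with the growth bound $f(s,u)\le Ms^2$ from $(H_2)$ to obtain the self-map estimate $|(\Phi u)(t)-u_0|\le \frac{\lambda M}{\Gamma(2\alpha+1)c_1^2}\,t^{2\alpha}$, check continuity and equicontinuity, and invoke Schauder via Lemma~\ref{lem2.2} and Lemma~\ref{lem2.3 }. The paper makes the choice of $h$ explicit as $h=\min\bigl\{\bigl(b\,\Gamma(2\alpha+1)c_1^2/(\lambda M)\bigr)^{1/(2\alpha)},\,T\bigr\}$, but otherwise the argument and the key cancellation you identify (the $s^2$ in the numerator neutralising the $t^{-2}$ from the nonlocal denominator) are exactly those of the paper.
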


\begin{proof}
Let
\begin{equation*}
E=\left\{ u\in C[0,T]:\| u-u_0\|_{C[0,T]}
=\sup_{0 \leq t\leq T}| u-u_0| \leq b\right\},
\end{equation*}
where $b$ is a positive constant.
Further, put
\begin{equation*}
D_{h}=\left\{ u:u\in C[0,h],\ \| u-u_0\| _{C[0,h]} \leq b\right\},
\end{equation*}
where
$$
h=\min \left\{ \left (b \left(\frac{ \lambda M }{
\Gamma(2 \alpha+1) c_{1}^{2}}\right)^{-1}\right)^{\frac{1}{2 \alpha}},T\right\}
$$
and $ 0< \alpha < \frac{1}{2}$. It is clear that $h \leq T$. Note also that
$D_{h}$ is a nonempty, bounded, closed, and convex subset of $C[0,h]$. In order
to apply Schauder's fixed point theorem, we define the following operator $A$:
\begin{equation}
\label{33}
(Au)(t)=u_0+\frac{\lambda}{\Gamma (2\alpha )}
\int_0^{t}(t-s)^{2\alpha-1}\frac{f(s, u(s))}{\left(
\int_{0}^{t}f(x, u)\,  dx\right)^{2}}  ds,\quad t\in [0,h].
\end{equation}
It is clear that all solutions of \eqref{13} are fixed points of \eqref{33}.
Then, by assumptions $(H_1)$ and $(H_2)$, it follows that $AD_{h}\subset D_{h}$.
Our next step, in order to prove Theorem~\ref{thm3.1},
consists to use the following two technical lemmas.
\begin{lemma}
\label{lemma:in1}
The operator $A$ is continuous.
\end{lemma}
\begin{lemma}
\label{lemma:in2}
The operator $AD_{h}$ is continuous.
\end{lemma}
One can prove that  $\{(Au)(t):u\in D_{h}\}$ is equicontinuous.
Taking into account that $AD_{h}\subset D_{h}$, we infer that
$AD_{h}$ is precompact. This implies that $A$ is completely continuous.
As a consequence of Schauder's fixed point theorem and Lemma~\ref{lem2.1},
we conclude that problem \eqref{13} has a local solution.
This ends the proof of Theorem~\ref{thm3.1}.
\end{proof}


\subsection{Continuation results}
\label{section43}

Now we give a continuation theorem for the fractional
Caputo nonlocal thermistor problem \eqref{13}.
First, we present the definition of noncontinuable solution.

\begin{definition}[See \cite{40}]
\label{def4.1}
Let $u(t)$ on $(0,\beta )$ and $\tilde{u}(t)$ on $(0,\tilde{\beta})$
be both solutions of \eqref{13}. If $\beta <\tilde{\beta}$ and
$u(t)=\tilde{u}(t)$ for $t\in (0,\beta )$, then we say
that $\tilde{u}(t)$ can be continued to $(0,\tilde{\beta})$.
A solution $u(t)$ is noncontinuable if it has no continuation.
The existing interval of the noncontinuable solution $u(t)$
is called the maximum existing interval of $u(t)$.
\end{definition}

\begin{theorem}
\label{thm4.1}
Assume that conditions $(H_1)$--$(H_3)$ are satisfied. Then
$u=u(t)$, $t\in (0,\beta )$, is noncontinuable if and if only
for some $\eta \in \left(0,\frac{\beta }{2}\right)$ and any bounded closed subset
$S\subset [\eta ,+\infty )\times\mathbb{R}$ there exists a
$t^{\ast }\in [ \eta ,\beta )$ such that $(t^{\ast},u(t^{\ast }))\notin S$.
\end{theorem}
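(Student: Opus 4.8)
The plan is to establish the two implications of the equivalence separately, following the standard continuation-theorem strategy for Volterra-type integral equations adapted to the fractional Caputo setting via the equivalent integral form \eqref{23}.

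First I would prove the contrapositive of the ``only if'' direction: assume there exist $\eta\in(0,\beta/2)$ and a bounded closed set $S\subset[\eta,+\infty)\times\mathbb{R}$ such that $(t,u(t))\in S$ for all $t\in[\eta,\beta)$, and show that $u$ is then continuable, i.e.\ $\beta$ is not the right endpoint of the maximal interval. Since $S$ is bounded, $u$ is bounded on $[\eta,\beta)$; combining this with hypotheses $(H_1)$ and $(H_2)$, the integrand in \eqref{23} is controlled (the denominator $\bigl(\int_0^t f(x,u)\,dx\bigr)^2\ge (c_1 t)^2\ge (c_1\eta)^2>0$ stays bounded away from zero, and the numerator is bounded by $c_2$), so one gets a uniform bound on $|(Au)'|$-type increments. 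More precisely, for $\eta\le t_1<t_2<\beta$ I would estimate $|u(t_2)-u(t_1)|$ using the representation \eqref{23}, splitting $\int_0^{t_2}-\int_0^{t_1}$ into the ``overlap'' part (where $(t_2-s)^{2\alpha-1}-(t_1-s)^{2\alpha-1}$ is integrated against a bounded function) and the ``new'' part $\int_{t_1}^{t_2}(t_2-s)^{2\alpha-1}(\cdots)\,ds$; both are $O(|t_2-t_1|^{2\alpha})$, hence $u$ is uniformly continuous on $[\eta,\beta)$ and $\lim_{t\to\beta^-}u(t)=:u_\beta$ exists. One then sets $u(\beta)=u_\beta$, observes that $(\beta,u_\beta)\in S$ still satisfies the hypotheses, and reapplies the local existence theorem (Theorem~\ref{thm3.1}) with initial point $\beta$ and data $u_\beta$ to extend $u$ past $\beta$ onto $[\beta,\beta+h')$ for some $h'>0$; checking that the concatenation solves \eqref{23} on the enlarged interval (this uses the semigroup/additivity property of the fractional integral) shows $u$ is continuable, contradicting noncontinuability.

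Conversely, for the ``if'' direction I would again argue by contraposition: suppose $u$ is continuable, say to $\tilde u$ on $(0,\tilde\beta)$ with $\tilde\beta>\beta$. Fix any $\eta\in(0,\beta/2)$. Then $\tilde u$ is continuous on the compact interval $[\eta,\beta]$, so the set $\{(t,\tilde u(t)):t\in[\eta,\beta]\}$ is compact; enclosing it in a closed ball gives a bounded closed $S\subset[\eta,+\infty)\times\mathbb{R}$ with $(t,u(t))=(t,\tilde u(t))\in S$ for every $t\in[\eta,\beta)$ (indeed for every $t\in[\eta,\beta]$), so no escape time $t^\ast$ exists, which is the negation of the right-hand condition. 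Assembling the two contrapositives yields the stated equivalence.

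The main obstacle I expect is the continuation step in the ``only if'' direction: one must verify that re-solving \eqref{23} from the new initial time $\beta$ and then splicing produces a genuine solution of the \emph{original} problem \eqref{13} on $(0,\tilde\beta)$. The subtlety is that the nonlocal denominator $\int_0^t f(x,u(x))\,dx$ involves the whole past history, not just the piece after $\beta$, so the ``new'' local problem one solves via Theorem~\ref{thm3.1} is not literally \eqref{13} but a perturbed version with a known additive contribution from $[0,\beta]$; one needs the Lipschitz hypotheses $(H_1)$, $(H_3)$ and the lower bound $f\ge c_1>0$ to run the fixed-point argument for this perturbed problem and to guarantee uniqueness, so that the extension agrees with $u$ on the overlap and the two pieces fit together into one solution. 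A secondary technical point is the sharp $O(|t_2-t_1|^{2\alpha})$ modulus-of-continuity estimate near the singularity of the kernel, but that is routine given the uniform bounds furnished by $S$ and hypotheses $(H_1)$–$(H_2)$.
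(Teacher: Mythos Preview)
Your proposal is correct and follows essentially the same route as the paper: argue the contrapositive of the ``only if'' direction by first showing (via Cauchy/uniform-continuity estimates on the integral representation \eqref{23}) that $\lim_{t\to\beta^-}u(t)$ exists, and then running a Schauder fixed-point argument based at $\beta$ to extend $u$, contradicting noncontinuability. The paper in fact only writes out this direction (your easy ``if'' contrapositive is left implicit), and it deals with the nonlocal-history subtlety you flag exactly as you anticipate, by introducing a new operator $K$ on $[\beta,\beta+h]$ whose definition absorbs the contribution from $[0,\beta]$ rather than by invoking Theorem~\ref{thm3.1} verbatim.
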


\begin{proof}
Suppose that there exists a compact subset
$S\subset [ \eta,+\infty )\times\mathbb{R}$ such that
$$
\left\{ (t,u(t)):t\in [ \eta ,\beta )\right\} \subset S.
$$
Compactness of $S$ implies $\beta <+\infty $.
The proof follows from Lemmas~\ref{insTh2:lem1} and \ref{insTh2:lem2}.
\end{proof}

\begin{lemma}
\label{insTh2:lem1}
The limit  $\lim_{t\to \beta ^{-}}u(t)$ exists.
\end{lemma}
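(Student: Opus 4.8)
The plan is to show that, under the hypothesis that $\{(t,u(t)):t\in[\eta,\beta)\}$ is contained in a compact set $S$ (which already forces $\beta<+\infty$), the limit $\lim_{t\to\beta^-}u(t)$ exists. First I would invoke the integral representation from Lemma~\ref{lem2.1}, so that for $t_1,t_2\in[\eta,\beta)$ with $t_1<t_2$ one has
\[
u(t_2)-u(t_1)=\frac{\lambda}{\Gamma(2\alpha)}\left(\int_0^{t_2}(t_2-s)^{2\alpha-1}g(s)\,ds-\int_0^{t_1}(t_1-s)^{2\alpha-1}g(s)\,ds\right),
\]
where I abbreviate $g(s)=f(s,u(s))\big/\bigl(\int_0^{t}f(x,u(x))\,dx\bigr)^{2}$ — being careful to note that the inner integral's upper limit is the outer variable, so I should write the two terms with their respective $t_i$ and then bound the difference of the kernels plus the difference coming from the nonlocal denominator. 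The key quantitative input is that, on the compact set $S$, hypothesis $(H_1)$ gives $c_1\le f(s,u(s))\le c_2$, so the denominator $\bigl(\int_0^{t}f(x,u(x))\,dx\bigr)^{2}\ge (c_1\eta)^2>0$ is bounded below uniformly for $t\ge\eta$, and the numerator is bounded above by $c_2$; hence the integrand $g(s)$ is uniformly bounded by some constant $C$ on $[0,\beta)$.

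Next I would split the estimate in the standard way for fractional integrals:
\[
|u(t_2)-u(t_1)|\le \frac{\lambda C}{\Gamma(2\alpha)}\left(\int_0^{t_1}\bigl|(t_2-s)^{2\alpha-1}-(t_1-s)^{2\alpha-1}\bigr|\,ds+\int_{t_1}^{t_2}(t_2-s)^{2\alpha-1}\,ds\right),
\]
plus an extra term controlling the variation of the nonlocal denominator between $t_1$ and $t_2$, which is itself Lipschitz in the upper limit because the integrand $f(x,u(x))$ is bounded. Each of these three pieces is $O((t_2-t_1)^{2\alpha})$ (for the first two this is the classical computation, using $2\alpha\in(0,1)$ so that $(t-s)^{2\alpha-1}$ is integrable and the kernel difference integrates to a constant times $t_1^{2\alpha}+( t_2-t_1)^{2\alpha}-t_2^{2\alpha}\le (t_2-t_1)^{2\alpha}$). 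Consequently $|u(t_2)-u(t_1)|\le K\,|t_2-t_1|^{2\alpha}$ for all $t_1,t_2\in[\eta,\beta)$ with a constant $K$ depending only on $\lambda,\alpha,c_1,c_2,\eta$. This uniform Hölder bound shows that $u$ is uniformly continuous on $[\eta,\beta)$, and since $\beta<+\infty$, $\{u(t_n)\}$ is Cauchy for every sequence $t_n\to\beta^-$; therefore $\lim_{t\to\beta^-}u(t)$ exists.

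The main obstacle I anticipate is the bookkeeping around the nonlocal term: because the denominator $\int_0^{t}f(x,u(x))\,dx$ depends on the \emph{outer} integration limit, the difference $u(t_2)-u(t_1)$ does not split cleanly into a single fractional integral of a fixed function, and one must carefully add and subtract an intermediate quantity (e.g. replace the denominator at $t_2$ by the denominator at $t_1$) so as to isolate a ``kernel-difference'' term, a ``tail'' term over $[t_1,t_2]$, and a ``denominator-variation'' term, then bound each separately. Once that decomposition is set up, each bound is routine fractional-calculus estimation; the only place needing care is ensuring the lower bound $(c_1\eta)^2$ on the denominator is legitimate, which is exactly where the compactness of $S$ (hence $t\ge\eta>0$) and hypothesis $(H_1)$ are used. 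I expect the companion Lemma~\ref{insTh2:lem2} then uses this limit to extend $u$ past $\beta$, contradicting noncontinuability, but that is beyond the present statement.
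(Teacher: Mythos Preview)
Your proposal is correct and matches the paper's approach: the paper's own proof of this lemma is the single sentence ``The proof is based on the Cauchy's convergence criterion,'' and what you have sketched is precisely the standard way to verify that criterion here --- use the integral representation of Lemma~\ref{lem2.1}, exploit the uniform bounds $c_1\le f\le c_2$ from $(H_1)$ together with $t\ge\eta>0$ to bound the nonlocal denominator away from zero, and then split $|u(t_2)-u(t_1)|$ into the kernel-difference piece, the tail piece over $[t_1,t_2]$, and the denominator-variation piece, each of order $(t_2-t_1)^{2\alpha}$. Your identification of the bookkeeping around the $t$-dependent denominator as the only delicate point is exactly right, and your handling of it (add and subtract, then use boundedness of $f$ to control the variation of $\int_0^{t}f$) is the intended one.
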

\begin{proof}
The proof is based on the Cauchy's convergence criterion.
\end{proof}

The second step of the proof of Theorem~\ref{thm4.1}
consists to show the following result.
\begin{lemma}
\label{insTh2:lem2}
Function $u(t)$ is continuable.
\end{lemma}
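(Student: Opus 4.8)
The plan is to show that the limit value $u(\beta^-)$, which exists by Lemma~\ref{insTh2:lem1}, can serve as an initial condition for a new Caputo problem on an interval to the right of $\beta$, and that gluing the resulting local solution onto $u$ produces a genuine continuation, contradicting the assumed noncontinuability. First I would extend $u$ to the closed interval $(0,\beta]$ by setting $u(\beta):=\lim_{t\to\beta^-}u(t)$; since all the integrals appearing in \eqref{23} are over $[0,t]$ with integrable singular kernel $(t-s)^{2\alpha-1}$ and the integrand is controlled by $(H_1)$ (so the denominator stays bounded below by $(c_1 t)^2>0$ for $t\ge\eta$), one checks that the extended $u$ still satisfies the integral equation \eqref{23} at $t=\beta$, hence is a solution on $(0,\beta]$.

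Next I would set up a fixed-point argument on a small interval $[\beta,\beta+\delta]$ entirely analogous to the proof of Theorem~\ref{thm3.1}. Define, for $t\in[\beta,\beta+\delta]$,
\begin{equation*}
(Bv)(t)=u_0+\frac{\lambda}{\Gamma(2\alpha)}\int_0^{t}(t-s)^{2\alpha-1}
\frac{f(s,v(s))}{\left(\int_0^{t}f(x,v(x))\,dx\right)^{2}}\,ds,
\end{equation*}
acting on the set of $v\in C[0,\beta+\delta]$ that agree with $u$ on $(0,\beta]$ and satisfy $\|v-u(\beta)\|_{C[\beta,\beta+\delta]}\le b$. Using $(H_1)$--$(H_2)$ exactly as in Theorem~\ref{thm3.1} — the lower bound $c_1$ on $f$ to tame the nonlocal denominator and the bound $f(s,v)\le Ms^2$ in the numerator — one shows that for $\delta$ small enough $B$ maps this closed, bounded, convex set into itself, and that $B$ is completely continuous (continuity of the operator plus an Arzelà--Ascoli/equicontinuity estimate, which is where the integrable kernel does the work). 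Schauder's fixed point theorem (Lemma~\ref{lem2.3 }) then yields a solution $\tilde u$ on $(0,\beta+\delta]$ with $\tilde u\equiv u$ on $(0,\beta)$.

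Finally, this $\tilde u$ is by Definition~\ref{def4.1} a continuation of $u$ to $(0,\beta+\delta)$ with $\beta<\beta+\delta$, contradicting the hypothesis that $u$ is noncontinuable; therefore $u$ is continuable, which is the assertion of the lemma. (In the ambient proof of Theorem~\ref{thm4.1}, this contradicts the standing assumption that $\{(t,u(t)):t\in[\eta,\beta)\}$ lies in a compact set $S$, since that assumption forced $\beta<+\infty$ and, together with boundedness of $u$ near $\beta$, is exactly what makes the continuation construction available.) The main obstacle is the nonlocal denominator: I must keep $\int_0^t f(x,v(x))\,dx$ bounded away from zero uniformly in $t\in[\beta,\beta+\delta]$ and show the map $v\mapsto Bv$ is continuous despite this term — but since $f\ge c_1>0$ by $(H_1)$, the denominator is bounded below by $(c_1\eta)^2$ throughout, so this reduces to the same routine estimates used for Theorem~\ref{thm3.1}, now performed on the shifted interval. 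A secondary point requiring care is verifying that the extended $u$ on $(0,\beta]$ and the fixed point $\tilde u$ match up as a single continuous solution of \eqref{13} — i.e. that $\tilde u$ inherits the Caputo differential form via Lemma~\ref{lem2.1} on the whole interval $(0,\beta+\delta)$.
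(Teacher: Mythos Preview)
Your proposal is correct and follows essentially the same approach as the paper: extend $u$ continuously to $t=\beta$ using the limit from Lemma~\ref{insTh2:lem1}, run a Schauder fixed-point argument on a short interval past $\beta$ exactly parallel to the local-existence proof of Theorem~\ref{thm3.1}, and glue to obtain a continuation contradicting noncontinuability. The only organizational difference is that the paper defines its fixed-point operator $K$ on $C[\beta,\beta+1]$ by splitting the integral at $\beta$ (packaging the contribution from $[0,\beta]$ into a term $u_1$), whereas you work with the full operator $B$ on functions in $C[0,\beta+\delta]$ constrained to coincide with $u$ on $(0,\beta]$; both formulations lead to the same estimates and the same invocation of Schauder. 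One small remark: continuity of the operator (your ``continuity of the operator plus an Arzel\`a--Ascoli/equicontinuity estimate'') will in practice call on $(H_3)$ as well as $(H_1)$--$(H_2)$, just as in the paper's standing hypotheses for this section.
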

\begin{proof}
As $S$ is a closed subset, we can say that $(\beta ,u^{\ast })\in S$.
Define $u(\beta )=u^{\ast }$. Hence, $u(t)\in C[0,\beta ]$.
Then we define the operator $K$ by
\begin{equation*}
(Kv)(t)=u_1+\frac{\lambda}{\Gamma (2 \alpha )}\int_{\beta }^{t}(t-s)^{2
\alpha-1} \frac{ f(s, v(s))}{\left( \int_{0}^{t}f(x, v)\, dx\right)^{2}} ds,
\end{equation*}
where
\begin{equation*}
u_1=u_0+\frac{\lambda}{\Gamma (2\alpha )}\int_0^{\beta }(t-s)^{2\alpha-1}
\frac{ f(s, v(s))}{\left( \int_{0}^{t}f(x, v)\, d x\right)^{2}} ds,
\end{equation*}
$v \in C([\beta ,\beta +1])$, $t\in [ \beta ,\beta+1]$. Set
\begin{equation*}
E_{b}=\left\{ (t,v):\beta \leq t\leq \beta +1,| v|
\leq \max_{\beta \leq t\leq \beta +1}| u_1(t)| +b\right\}
\end{equation*}
and
\begin{equation*}
E_{h}=\left\{ v\in C[\beta ,\beta +1]:\max_{t\in [ \beta ,\beta+h]}| v(t)-u_1(t)|
\leq b,v(\beta )=u_1(\beta)\right\},
\end{equation*}
where $h=\min\left\{ \left (b \left(\frac{ \lambda M}{\Gamma(2 \alpha +1 )
c_{1}^{2}} \right )^{-1}\right)^{\frac{1}{2 \alpha}},1\right\}$.
Similarly to the proof of Theorem~\ref{thm3.1}, we show that $K$
is completely continuous on $E_{b}$, which yields that operator $K$ 
is continuous. We show that $KE_{h}$ is equicontinuous.
Consequently, $K$ is completely continuous. Then, Schauder's fixed
point theorem can be applied to obtain that operator $K$ has a fixed point
$\tilde{u}(t)\in E_{h}$. On other words, we have
\begin{equation*}
\begin{aligned}
\tilde{u}(t)
&= u_1+\frac{\lambda}{\Gamma (2\alpha )}\int_{\beta}^{t}
(t-s)^{2\alpha -1} \frac{ f(s, \tilde{u}(s))}{\left(
\int_{0}^{t}f(x, \tilde{u}(x))\, dx\right)^{2}} ds\\
&= u_0+\frac{\lambda}{\Gamma (2\alpha )}
\int_0^{t}(t-s)^{2\alpha -1} \frac{ f(s, \tilde{u}(s))}{\left(
\int_{0}^{t}f(x, \tilde{u}(x))\, dx\right)^{2}} ds,
\end{aligned}
\end{equation*}
$t\in [ \beta ,\beta +h]$, where
\[
\tilde{u}(t)=\begin{cases}
u(t), & t\in (0,\beta ] \\
\tilde{u}(t), & t\in [ \beta ,\beta +h].
\end{cases}
\]
It follows that $\tilde{u}(t)\in C[0,\beta +h]$ and
\begin{equation*}
\tilde{u}(t)=u_0+\frac{\lambda}{\Gamma (2\alpha )}
\int_0^{t}(t-s)^{2\alpha-1}\frac{ f(s, \tilde{u}(s))}{\left(
\int_{0}^{t}f(x, \tilde{u}(x))\, dx\right)^{2}} ds.
\end{equation*}
Therefore, according to Lemma~\ref{lem2.1}, $\tilde{u}(t)$ is a solution of
\eqref{13} on $(0,\beta +h]$. This is absurd because $u(t)$ is
noncontinuable. This completes the proof of Lemma~\ref{insTh2:lem2}.
\end{proof}

Similarly to Theorem~\ref{thm11},
uniqueness of solution to problem \eqref{13}
is derived from the proof of Theorem~\ref{thm4.1}
for a well chosen $\lambda$.


\subsection{Global existence of solutions}
\label{section53}

Now we provide two sets of sufficient conditions
for the existence of a global solution for \eqref{13}
(Theorems~\ref{thm5.2} and \ref{thm5.2b}).
We begin with an auxiliary lemma.

\begin{lemma}
\label{thm5.1}
Suppose that conditions $(H_1)$--$(H_3)$ hold. Let $u(t)$ be a solution
of \eqref{13} on $(0,\beta )$. If $u(t)$ is bounded on $[\tau ,\beta )$
for some $\tau >0$, then $\beta =+\infty$.
\end{lemma}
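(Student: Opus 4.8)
The plan is to argue by contradiction: suppose $\beta < +\infty$ while $u(t)$ remains bounded on $[\tau,\beta)$. I would first combine this hypothesis with the local boundedness of $u$ on the compact interval $[0,\tau]$ (which holds because $u \in C(0,\beta)$ and, by Lemma~\ref{lem2.1}, $u$ is given by the integral formula \eqref{23}, hence extends continuously to $t=0$) to conclude that the full graph $\{(t,u(t)) : t \in (0,\beta)\}$ is contained in a bounded subset of $(0,+\infty)\times\mathbb{R}$. Fixing any $\eta \in \left(0,\frac{\beta}{2}\right)$ with $\eta \le \tau$, the closure $S$ of the graph restricted to $[\eta,\beta)$ is then a bounded closed — hence compact — subset of $[\eta,+\infty)\times\mathbb{R}$, and by construction $(t,u(t)) \in S$ for every $t \in [\eta,\beta)$.

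Next I would invoke Theorem~\ref{thm4.1}: its characterization says that $u$ is noncontinuable if and only if for some $\eta \in \left(0,\frac{\beta}{2}\right)$ and \emph{every} bounded closed $S \subset [\eta,+\infty)\times\mathbb{R}$ there is a $t^{\ast} \in [\eta,\beta)$ with $(t^{\ast},u(t^{\ast})) \notin S$. The $S$ we have just produced violates this for the chosen $\eta$, so $u$ cannot be noncontinuable; that is, $u$ admits a proper continuation to some interval $(0,\beta')$ with $\beta' > \beta$. But a solution defined on $(0,\beta)$ with $\beta < +\infty$ is, by definition, either noncontinuable or extendable; having ruled out the latter being impossible, we reach the contradiction. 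The cleanest way to phrase this is: boundedness of the graph over $[\tau,\beta)$ forces the hypotheses of the ``continuable'' branch of Theorem~\ref{thm4.1} (equivalently, contradicts the ``noncontinuable'' branch), so $\beta$ cannot be finite, i.e. $\beta = +\infty$.

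The main obstacle is bookkeeping rather than deep mathematics: one must check that the graph of $u$ over the \emph{left} endpoint neighborhood $(0,\tau]$ is also bounded, so that the compact set $S$ genuinely captures the whole trajectory on $[\eta,\beta)$ — this uses that $u_0$ is finite and that the integral in \eqref{23} is bounded on $[0,\tau]$ by $(H_1)$ (via $c_1 \le f \le c_2$, so the nonlocal denominator is bounded below) and $(H_2)$. A second small point is to make sure $\eta$ can be chosen in $\left(0,\frac{\beta}{2}\right)$ and simultaneously $\le \tau$; if $\tau \ge \frac{\beta}{2}$ one simply picks any $\eta \in \left(0,\frac{\beta}{2}\right)$ and uses boundedness on $[\eta,\beta)$, which still holds because $[\eta,\beta) \subset [0,\tau]\cup[\tau,\beta)$ and $u$ is bounded on each piece. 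With these verifications in place, the contradiction with Theorem~\ref{thm4.1} is immediate and the lemma follows.
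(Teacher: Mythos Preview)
Your approach matches the paper's, whose proof is the single line ``Follows immediately from the results of Subsection~\ref{section43}''; you have correctly unpacked this as an appeal to the continuation characterization in Theorem~\ref{thm4.1}. One caveat: the contradiction only closes if you take $(0,\beta)$ to be the \emph{maximal} interval of existence (i.e., $u$ noncontinuable), which the lemma tacitly intends and which is how it is used in Theorems~\ref{thm5.2} and~\ref{thm5.2b} --- your sentence ``either noncontinuable or extendable; having ruled out the latter being impossible, we reach the contradiction'' obscures rather than states this, so make the maximality hypothesis explicit at the outset and the argument is clean.
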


\begin{proof}
Follows immediately from the results of Subsection~\ref{section43}.
\end{proof}

\begin{theorem}
\label{thm5.2}
Suppose that conditions $(H_1)$--$(H_3)$ hold.
Then \eqref{13} has a solution in $C([0,+\infty ))$.
\end{theorem}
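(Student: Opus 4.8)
\textbf{Proof plan for Theorem~\ref{thm5.2}.}
The strategy is a contradiction-plus-continuation argument built on the machinery already assembled. Suppose that \eqref{13} does not admit a solution on all of $C([0,+\infty))$. By Theorem~\ref{thm3.1} we have a local solution on some $C[0,h]$; extending it as far as possible (invoking Zorn's lemma on the partially ordered set of solutions ordered by the continuation relation of Definition~\ref{def4.1}) produces a \emph{noncontinuable} solution $u(t)$ defined on a maximal interval $(0,\beta)$. If $\beta=+\infty$ we are done, so assume $\beta<+\infty$. By Lemma~\ref{thm5.1}, in order to force the contradiction it suffices to show that $u$ is bounded on $[\tau,\beta)$ for some $\tau>0$; then $\beta=+\infty$, contradicting $\beta<+\infty$.

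So the crux is the a priori bound. Starting from the integral representation \eqref{23}, estimate
\[
|u(t)| \le |u_0| + \frac{\lambda}{\Gamma(2\alpha)}\int_0^t (t-s)^{2\alpha-1}
\frac{f(s,u(s))}{\left(\int_0^t f(x,u)\,dx\right)^2}\,ds .
\]
Using $(H_1)$ the denominator is bounded below by $(c_1 t)^2$, and using $(H_2)$ the numerator satisfies $f(s,u(s))\le M s^2 \le M t^2$ on the interval. Hence the integrand is dominated by $\frac{\lambda M}{c_1^2}(t-s)^{2\alpha-1}$, and integrating gives
\[
|u(t)| \le |u_0| + \frac{\lambda M}{c_1^2\,\Gamma(2\alpha+1)}\,t^{2\alpha}
\le |u_0| + \frac{\lambda M}{c_1^2\,\Gamma(2\alpha+1)}\,\beta^{2\alpha}
\]
for all $t\in(0,\beta)$. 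Since $\beta<+\infty$, the right-hand side is a finite constant, so $u$ is bounded on $(0,\beta)$ and in particular on $[\tau,\beta)$ for any $\tau\in(0,\beta)$. Applying Lemma~\ref{thm5.1} then yields $\beta=+\infty$, the desired contradiction, and the proof is complete.

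I expect the only genuinely delicate point to be the bookkeeping needed to guarantee that a maximal noncontinuable solution actually exists and is unique enough to talk about ``the'' maximal interval $(0,\beta)$; this is where Definition~\ref{def4.1}, Theorem~\ref{thm4.1} and the uniqueness remark following it must be marshalled carefully, since the continuation step in Lemma~\ref{insTh2:lem2} is what rules out the maximal interval being finite. The a priori estimate itself is routine once hypotheses $(H_1)$ and $(H_2)$ are in force: the key mechanism is that $(H_2)$'s growth bound $f(s,u)\le Ms^2$ exactly cancels the potentially singular nonlocal denominator $\big(\int_0^t f\big)^2\ge (c_1 t)^2$, leaving a harmless power $t^{2\alpha}$ of the (finite) endpoint. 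Note that no Gronwall argument is needed here because the bound on the integrand is independent of $u$; Lemma~\ref{lem5.1} will instead be relevant for the second global existence result, Theorem~\ref{thm5.2b}, where presumably only $(H_3)$ rather than the strong bound $(H_2)$ is assumed.
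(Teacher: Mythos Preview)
Your proof is correct and follows the same overall architecture as the paper: assume a noncontinuable solution on a finite maximal interval $(0,\beta)$, establish an a priori bound, and invoke Lemma~\ref{thm5.1} to reach a contradiction. The genuine difference lies in how the a priori bound is obtained. You use $(H_2)$ together with $(H_1)$ to make the integrand $u$-independent, namely $\frac{f(s,u(s))}{(\int_0^t f)^2}\le \frac{M t^2}{(c_1 t)^2}=\frac{M}{c_1^2}$, and integrate directly. The paper instead estimates $\frac{1}{(c_1 t)^2}|f(s,u(s))|$ by a multiple of $|u(s)|$, arriving at an inequality of the form $|u(t)|\le |u_0|+C\int_0^t (t-s)^{2\alpha-1}|u(s)|\,ds$, and then applies the generalized Gronwall inequality (Lemma~\ref{lem5.1}) to close the bound. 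So contrary to your expectation, the paper \emph{does} use Gronwall already here in Theorem~\ref{thm5.2}, not only in Theorem~\ref{thm5.2b}. Your route is more elementary and makes sharper use of $(H_2)$; the paper's route is slightly more flexible in that it would still work under a hypothesis of the type $f(s,u)\le C s^2|u|$ without the uniform bound $(H_2)$. Your side remark about the hypotheses of Theorem~\ref{thm5.2b} is not quite right either: that theorem replaces $(H_1)$--$(H_3)$ by a linear-growth condition $c_3\le |f(s,x)|\le c_4|x|+c_5$, and it is this linear growth that forces the Gronwall argument there.
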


\begin{proof}
The existence of a local solution $u(t)$ of \eqref{13} is ensured thanks to
Theorem~\ref{thm3.1}. We already know, by Lemma~\ref{lem2.1},
that $u(t)$ is a also a solution to the integral equation
\begin{equation*}
u(t)=u_0+\frac{\lambda}{\Gamma (2\alpha )}\int_0^{t}(t-s)^{2\alpha
-1}\frac{ f(s, u(s))}{\left( \int_{0}^{t}f(x, u(x))\, dx\right)^{2}}ds.
\end{equation*}
Suppose that the existing interval of the noncontinuable solution $u(t)$ is
$(0,\beta)$, $\beta<+\infty$. Then,
\begin{align*}
|u(t)|
&= \left| u_0+\frac{\lambda}{\Gamma (2\alpha)}
\int_0^{t}(t-s)^{2\alpha -1}\frac{ f(s, u(s))}{\left(
\int_{0}^{t}f(x, u(x))\, dx\right)^{2}}ds\right| \\
& \leq |u_0|+ \frac{\lambda}{\Gamma (2\alpha )} \frac{1}{(c_{1}t)^{2}}
\int_0^{t}(t-s)^{2\alpha -1}\left| f(s, u(s))\right|ds\\
&\leq |u_0| + \frac{\lambda }{\Gamma (2\alpha )}
\frac{1}{c_{1}^{2}} \int_0^{t} \frac{|u(s)|}{(t-s)^{1- 2\alpha}} ds.
\end{align*}
By Lemma~\ref{lem2.1}, there exists a constant $k(\alpha)$ such that,
for $t\in (0, \beta)$, we have
\begin{align*}
| u(t)| & \leq |u_0| + k |u_0|\frac{\lambda }{\Gamma (2\alpha )}
\frac{1}{c_{1}^{2}} \int_0^{t} (t-s)^{2\alpha-1} ds,
\end{align*}
which is bounded on $(0, \beta)$. Thus, by Lemma~\ref{thm5.1},
problem \eqref{13} has a solution $u(t)$ on $(0,+\infty )$.
\end{proof}

Next we give another sufficient condition ensuring
global existence for \eqref{13}.

\begin{theorem}
\label{thm5.2b}
Suppose that there exist positive constants
$c_{3}$, $c_{4}$ and  $c_{5}$ such that
$c_{3} \leq |f(s, x)|\leq c_{4}|x|+ c_{5}$.
Then \eqref{13} has a solution in $C([0,+\infty ))$.
\end{theorem}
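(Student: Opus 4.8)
The plan is to follow the same three-step scheme used for Theorem~\ref{thm5.2} --- local existence, a priori bound on any noncontinuable solution, conclusion via Lemma~\ref{thm5.1} --- but now the generalized Gronwall inequality (Lemma~\ref{lem5.1}) will carry the whole weight of the a priori estimate, since the new hypothesis only provides \emph{linear} growth of $f$ in its second argument (as opposed to the bound $f\le Ms^{2}$ available under $(H_2)$). I would keep in force the regularity parts of $(H_1)$ and $(H_3)$ (Lipschitz continuity of $f$ in the second variable, and $f\ge 0$), so that Theorem~\ref{thm3.1}, Lemma~\ref{lem2.1}, and Theorem~\ref{thm4.1} remain applicable; the only structural change is that the two-sided bound $c_{1}\le f\le c_{2}$ is replaced by $c_{3}\le|f(s,x)|\le c_{4}|x|+c_{5}$. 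First I would invoke Theorem~\ref{thm3.1} and Theorem~\ref{thm4.1} to obtain a noncontinuable solution $u$ on its maximal interval $(0,\beta)$, which by Lemma~\ref{lem2.1} satisfies the integral equation \eqref{23}, and then argue by contradiction assuming $\beta<+\infty$.

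Fixing $\tau\in(0,\beta)$, the key estimate for $t\in[\tau,\beta)$ uses $\int_{0}^{t}f(x,u(x))\,dx\ge c_{3}t\ge c_{3}\tau$ in the denominator and $|f(s,u(s))|\le c_{4}|u(s)|+c_{5}$ in the numerator, giving
\[
|u(t)|\le|u_{0}|+\frac{\lambda}{\Gamma(2\alpha)\,c_{3}^{2}\tau^{2}}\int_{0}^{t}(t-s)^{2\alpha-1}\bigl(c_{4}|u(s)|+c_{5}\bigr)\,ds.
\]
Bounding $\int_{0}^{t}(t-s)^{2\alpha-1}\,ds=t^{2\alpha}/(2\alpha)\le\beta^{2\alpha}/(2\alpha)$ (finite, since $\beta<+\infty$), this takes the form
\[
|u(t)|\le w+a\int_{0}^{t}\frac{|u(s)|}{(t-s)^{1-2\alpha}}\,ds,\qquad t\in[\tau,\beta),
\]
with $a=\dfrac{\lambda c_{4}}{\Gamma(2\alpha)\,c_{3}^{2}\tau^{2}}$ a constant independent of $t$ and $w=|u_{0}|+\dfrac{\lambda c_{5}}{\Gamma(2\alpha)\,c_{3}^{2}\tau^{2}}\cdot\dfrac{\beta^{2\alpha}}{2\alpha}$ a finite constant. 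Since $u$ is continuous on the compact interval $[0,\tau]$, it is bounded there, say $|u|\le C_{0}$; putting $\tilde{w}=\max\{C_{0},w\}$, the inequality $|u(t)|\le\tilde{w}+a\int_{0}^{t}|u(s)|(t-s)^{2\alpha-1}\,ds$ then holds for \emph{all} $t\in[0,\beta)$.

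Because $0<\alpha<\tfrac12$ we have $1-2\alpha\in(0,1)$, so Lemma~\ref{lem5.1} applies (with its exponent taken to be $1-2\alpha$ and its function $w(\cdot)$ the constant $\tilde{w}$), yielding
\[
|u(t)|\le\tilde{w}+k\,a\int_{0}^{t}\frac{\tilde{w}}{(t-s)^{1-2\alpha}}\,ds=\tilde{w}\Bigl(1+\frac{k\,a}{2\alpha}\,t^{2\alpha}\Bigr)\le\tilde{w}\Bigl(1+\frac{k\,a}{2\alpha}\,\beta^{2\alpha}\Bigr)
\]
for every $t\in[0,\beta)$, where $k=k(1-2\alpha)$. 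Hence $u$ is bounded on $[0,\beta)$, in particular on $[\tau,\beta)$, and Lemma~\ref{thm5.1} forces $\beta=+\infty$, contradicting $\beta<+\infty$; therefore the noncontinuable solution is defined on all of $[0,+\infty)$ and, being continuous up to $t=0$ by Theorem~\ref{thm3.1}, lies in $C([0,+\infty))$. The one delicate point I expect is the nonlocal denominator: the factor $\bigl(\int_{0}^{t}f\,dx\bigr)^{-2}$ behaves like $t^{-2}$ near the origin, so the Gronwall estimate cannot be run on all of $(0,\beta)$ directly; restricting to $[\tau,\beta)$ (which is exactly the form of boundedness that Lemma~\ref{thm5.1} requires) and checking that the resulting constant $a$ no longer depends on $t$ is what makes Lemma~\ref{lem5.1} legitimately applicable.
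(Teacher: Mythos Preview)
Your proposal is correct and follows essentially the same approach as the paper: assume $\beta<+\infty$, bound the nonlocal denominator below by $(c_{3}\tau)^{2}$ on $[\tau,\beta)$, use the linear growth of $f$ to obtain a weakly singular integral inequality, apply the generalized Gronwall inequality (Lemma~\ref{lem5.1}), and conclude via Lemma~\ref{thm5.1}. If anything, your version is tidier than the paper's, which has some inconsistencies in the constant labels and is less explicit about extending the Gronwall hypothesis from $[\tau,\beta)$ to all of $[0,\beta)$; your device of absorbing $\sup_{[0,\tau]}|u|$ into $\tilde w$ cleanly resolves that point.
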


\begin{proof}
Suppose that the maximum existing interval of $u(t)$ is
$(0,\beta )$, $\beta<+\infty$. We claim that $u(t)$
is bounded on $[\tau ,\beta )$ for any $\tau \in (0,\beta )$.
Indeed, we have
\begin{align*}
|u(t)|
&= \left| u_0+\frac{\lambda}{\Gamma (2\alpha )}
\int_0^{t}(t-s)^{2\alpha -1}\frac{ f(s, u(s))}{\left(
\int_{0}^{t}f(x, u(x))\, dx\right)^{2}}ds\right| \\
&\leq |u_0|+\frac{\lambda}{\Gamma (2\alpha )}
\frac{c_{3}}{(c_{1}\tau)^{2}}\int_0^{t}(t-s)^{2\alpha -1} ds
+ \frac{\lambda }{\Gamma (2\alpha )} \frac{c_{2}}{(c_{1}\tau)^{2}}
\int_0^{t}  \frac{|u(s)|}{(t-s)^{1- 2\alpha}} ds.
\end{align*}
If we take
$$
w(t)=|u_0|+ \frac{\lambda}{\Gamma (2\alpha )}
\frac{c_{3}}{(c_{1}\tau)^{2}}\int_0^{t}(t-s)^{2\alpha -1}ds,
$$
which is bounded, and
$$
a=\frac{\lambda c_{2}}{\Gamma (2\alpha )} \frac{1}{(c_{1}\beta)^{2}},
$$
it follows,  according with Lemma~\ref{lem5.1}, that
$v(t)=|u(t)|$ is bounded. Thus, by Lemma~\ref{thm5.1},
problem \eqref{13} has a solution $u(t)$ on $(0,+\infty )$.
\end{proof}

For more details on the subject, see
\cite{sidiammi3,MR3736617,sidiammi1,sidiammi2,GL}.


\section{Fractional problems on arbitrary time scales}
\label{sec:ts}

Throughout the reminder of this chapter,
we denote by $\mathbb{T}$ a time scale,
which is a nonempty closed subset of $\mathbb{R}$ with its inherited topology.
For convenience, we make the blanket assumption that $t_{0}$ and $T$
are points in $\mathbb{T}$. Our main concern is to prove existence and uniqueness
of solution to a fractional order nonlocal thermistor problem of the form
\begin{equation}
\label{eq12}
\begin{gathered}
^{\mathbb{T}}D^{2 \alpha}_{t_{0+}} u(t)
= \frac{\lambda f(u(t))}{\left(\int_{t_{0}}^{T} f(u(x))\, \triangle x\right)^{2}}
\, , \quad  t \in (t_{0}, T)  \, , \\
^{\mathbb{T}}I_{t_{0+}}^{\beta}u(t_{0})=0,
\quad    \forall \, \beta \in  (0, 1),
\end{gathered}
\end{equation}
under suitable conditions on $f$ as described below.
We assume that $\alpha \in (0,1)$ is a parameter describing
the order of the FD;
$^{\mathbb{T}}D^{2 \alpha}_{t_{0+}}$ is
the left Riemann--Liouville FD operator of order
$2 \alpha$ on $\mathbb{T}$; $^{\mathbb{T}}I^{\beta}_{t_{0+}}$ is
the left Riemann--Liouville FI operator
of order $\beta$ defined on $\mathbb{T}$ by \cite{MyID:328}
(see Section~\ref{section22}, where these definitions
and main properties of the fractional operators
on time scales are recalled). As before, $u$ may be interpreted
as the temperature inside the conductor
and $f(u)$ the electrical conductivity of the material.

In the literature, many existence results for dynamic equations on time scales
are available \cite{dogan1,dogan2}. In recent years, there has been also
significant interest in the use of FDEs
in mathematical modeling \cite{MR3370824,MR3529931,MR3535730}.
However, much of the work published to date has been
concerned separately, either by the time-scale community or
by the fractional one. Results on fractional dynamic equations
on time scales are scarce \cite{ahmad}. Here we give existence
and uniqueness results for the fractional order
nonlocal thermistor problem on time scales \eqref{eq12},
putting together time scale and fractional domains.
According with \cite{MR3323914,MyID:358,MR3498485},
this is quite appropriate from the point
of view of practical applications.
Our main aim is to prove existence of solutions for \eqref{eq12}
using a fixed point theorem and, consequently, uniqueness
(see Theorems~\ref{thm12} and \ref{corollary2}).
For more details see \cite{MyID:365}.


\subsection{Fractional calculus on time scales}
\label{section22}

We deal with the notions of Riemann--Liouville FIs
and FDs on time scales, the so called BHT
fractional calculus on time scales \cite{MyID:358}.
For local approaches, we refer the reader to \cite{ben,MyID:320}.
Here we are interested in nonlocal operators, which are the ones who make
sense with respect to thermistor-type problems \cite{MR2397904,MR2980253}.
Although we restrict ourselves to the delta approach on time scales,
similar results are trivially obtained for the nabla fractional
case \cite{MyID:224}.

\begin{definition}[Riemann--Liouville FI on time scales \cite{MyID:328}]
\label{def12}
Let $\mathbb{T}$ be a time scale and $[a, b]$ an interval of $\mathbb{T}$.
Then the left fractional integral on time scales of order $ 0 < \alpha <1$
of a function $g: \mathbb{T} \rightarrow \mathbb{R}$ is defined by
$$
^{\mathbb{T}}I^{\alpha}_{a+} g(t)
= \int_a^t \frac{(t - s)^{ \alpha-1}}{\Gamma( \alpha)} g(s)\, \triangle s,
$$
where $\Gamma $ is the Euler gamma function.
\end{definition}

The left Riemann--Liouville FD operator
of order $\alpha$ on time scales is then defined using
Definition~\ref{def12} of FI.

\begin{definition}[Riemann--Liouville FD on time scales \cite{MyID:328}]
\label{def22}
Let $\mathbb{T}$ be a time scale, $[a, b]$ an interval of $\mathbb{T}$,
and $\alpha \in (0,1)$. Then the left Riemann--Liouville FD
on time scales of order $\alpha$ of a function $g: \mathbb{T} \rightarrow \mathbb{R}$
is defined by
$$
^{\mathbb{T}}D^{\alpha}_{a+} g(t)
= \left (\int_a^t \frac{(t - s)^{ -\alpha}}{\Gamma(1- \alpha)}
g(s)\, \triangle s \right)^{\triangle}.
$$
\end{definition}

\begin{remark}
If $\mathbb{T}= \mathbb{R}$, then we obtain from Definitions~\ref{def12} and \ref{def22},
respectively, the usual left Rieman--Liouville FI and FD.
\end{remark}

\begin{proposition}[See \cite{MyID:328}]
Let $\mathbb{T}$ be a time scale,
$g: \mathbb{T} \rightarrow \mathbb{R}$ and $0< \alpha < 1$. Then
\[
{^{\mathbb{T}}D}^{\alpha}_{a+} g
= \triangle \circ {^{\mathbb{T}}I}^{1- \alpha}_{a+} g.
\]
\end{proposition}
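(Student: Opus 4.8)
The plan is to obtain the identity directly from Definitions~\ref{def12} and \ref{def22}, since the two fractional operators on time scales were set up precisely so that this factorization is immediate. First I would write out the left Riemann--Liouville fractional integral of order $1-\alpha$ by substituting $1-\alpha$ for $\alpha$ in Definition~\ref{def12}: this is legitimate because $0<\alpha<1$ forces $0<1-\alpha<1$, so the order stays in the admissible range, and it yields
\[
{^{\mathbb{T}}I}^{1-\alpha}_{a+} g(t)
= \int_a^t \frac{(t-s)^{-\alpha}}{\Gamma(1-\alpha)}\, g(s)\,\triangle s .
\]

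Next I would compare this with the bracketed expression appearing inside Definition~\ref{def22} of the left Riemann--Liouville fractional derivative of order $\alpha$: the integrand $\frac{(t-s)^{-\alpha}}{\Gamma(1-\alpha)}\,g(s)$ is verbatim the same, so the function being delta-differentiated in Definition~\ref{def22} is exactly ${^{\mathbb{T}}I}^{1-\alpha}_{a+} g$. Applying $(\cdot)^{\triangle}$ to both sides then gives the chain
\[
{^{\mathbb{T}}D}^{\alpha}_{a+} g
= \left( {^{\mathbb{T}}I}^{1-\alpha}_{a+} g \right)^{\triangle}
= \big( \triangle \circ {^{\mathbb{T}}I}^{1-\alpha}_{a+} \big) g ,
\]
which is the asserted operator identity.

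I do not expect a real obstacle: the statement is essentially a definitional reformulation. The only point deserving a word of care is the matching of domains, i.e., that both sides are to be read at points $t$ where ${^{\mathbb{T}}I}^{1-\alpha}_{a+} g$ is delta-differentiable --- automatic at right-scattered points, and part of what it means for ${^{\mathbb{T}}D}^{\alpha}_{a+} g(t)$ to exist at right-dense points --- together with the fact that the weakly singular $\triangle$-integral defining ${^{\mathbb{T}}I}^{1-\alpha}_{a+} g$ is well defined under the standing assumptions on $g$ as in \cite{MyID:328} (here $1-\alpha<1$ is what guarantees integrability of the kernel). Past this bookkeeping, the proof is just the short display above.
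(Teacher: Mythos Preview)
Your proof is correct: the identity is an immediate consequence of Definitions~\ref{def12} and \ref{def22}, obtained by substituting $1-\alpha$ for $\alpha$ in the integral operator and recognizing the result as the expression being $\triangle$-differentiated in the definition of ${^{\mathbb{T}}D}^{\alpha}_{a+}$. The paper itself does not supply a proof of this proposition---it is only recalled from \cite{MyID:328} as part of the preliminaries---so there is nothing further to compare; your argument is exactly the direct definitional verification one would expect.
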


\begin{proposition}[See \cite{MyID:328}]
If $\alpha >0$ and $g \in C([a, b])$, then
\[
^{\mathbb{T}}D^{\alpha}_{a+}
\circ {^{\mathbb{T}}I}^{\alpha}_{a+}g= g.
\]
\end{proposition}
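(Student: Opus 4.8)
The plan is to reduce the composition to the first-order integral and then invoke the fundamental theorem of the delta calculus; implicitly we assume $0<\alpha<1$, so that the operators in Definitions~\ref{def12} and \ref{def22} are defined. By the preceding proposition, ${^{\mathbb{T}}D}^{\alpha}_{a+} = \triangle \circ {^{\mathbb{T}}I}^{1-\alpha}_{a+}$, hence
\[
{^{\mathbb{T}}D}^{\alpha}_{a+}\bigl({^{\mathbb{T}}I}^{\alpha}_{a+}g\bigr)
= \bigl({^{\mathbb{T}}I}^{1-\alpha}_{a+}\circ {^{\mathbb{T}}I}^{\alpha}_{a+}g\bigr)^{\triangle},
\]
and it suffices to establish the index (semigroup) law ${^{\mathbb{T}}I}^{1-\alpha}_{a+}\circ {^{\mathbb{T}}I}^{\alpha}_{a+}g = {^{\mathbb{T}}I}^{1}_{a+}g$: once this is known, ${^{\mathbb{T}}I}^{1}_{a+}g(t)=\int_{a}^{t}g(s)\,\triangle s$ and, $g$ being continuous and therefore rd-continuous, the fundamental theorem of calculus on time scales \cite{MR1062633} yields $\bigl(\int_{a}^{t}g(s)\,\triangle s\bigr)^{\triangle}=g(t)$, which is the claim.

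To prove the index law, I would expand the left-hand side as an iterated delta integral,
\[
{^{\mathbb{T}}I}^{1-\alpha}_{a+}\bigl({^{\mathbb{T}}I}^{\alpha}_{a+}g\bigr)(t)
= \frac{1}{\Gamma(1-\alpha)\Gamma(\alpha)}\int_{a}^{t}(t-\tau)^{-\alpha}
\left(\int_{a}^{\tau}(\tau-s)^{\alpha-1}g(s)\,\triangle s\right)\triangle\tau ,
\]
then interchange the order of integration by Fubini's theorem for double delta integrals and factor out $g(s)$:
\[
= \frac{1}{\Gamma(1-\alpha)\Gamma(\alpha)}\int_{a}^{t}g(s)
\left(\int_{s}^{t}(t-\tau)^{-\alpha}(\tau-s)^{\alpha-1}\,\triangle\tau\right)\triangle s .
\]
Boundedness of $g$ together with $\alpha-1>-1$ and $-\alpha>-1$ makes all the integrals converge near the singular endpoints $\tau=s$ and $\tau=t$; these are estimates of the same kind already used for the Riemann--Liouville operators earlier in the paper. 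It then remains to evaluate the time-scale Beta-type integral
\[
\int_{s}^{t}(t-\tau)^{-\alpha}(\tau-s)^{\alpha-1}\,\triangle\tau = \Gamma(\alpha)\Gamma(1-\alpha) ,
\]
which is the identity established in \cite{MyID:328}; substituting it back collapses the iterated integral to $\int_{a}^{t}g(s)\,\triangle s = {^{\mathbb{T}}I}^{1}_{a+}g(t)$, completing the argument.

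The step I expect to be the main obstacle is the evaluation of this convolution of the two singular power kernels. On a general time scale there is no a priori reason for $\int_{s}^{t}(t-\tau)^{-\alpha}(\tau-s)^{\alpha-1}\,\triangle\tau$ to equal its classical value $B(\alpha,1-\alpha)=\Gamma(\alpha)\Gamma(1-\alpha)$, and the singularity of $(\tau-s)^{\alpha-1}$ at $\tau=s$ also makes the Fubini interchange delicate (one must handle the $\sigma$-shift that typically appears in the inner limit when swapping iterated delta integrals); this is the technical heart of \cite{MyID:328}. Once that Beta identity is granted, the remaining ingredients --- Fubini for double delta integrals and the fundamental theorem of the delta calculus for rd-continuous integrands --- are standard, and the factorization ${^{\mathbb{T}}D}^{\alpha}_{a+}=\triangle\circ {^{\mathbb{T}}I}^{1-\alpha}_{a+}$ is already in hand from the preceding proposition.
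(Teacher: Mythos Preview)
The paper does not supply a proof of this proposition; it is merely stated with a citation to \cite{MyID:328}. There is therefore nothing in the present text to compare your argument against.

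Your outline is the natural one and presumably mirrors the argument in the cited source: factor ${^{\mathbb{T}}D}^{\alpha}_{a+}=\triangle\circ{^{\mathbb{T}}I}^{1-\alpha}_{a+}$ via the preceding proposition, reduce to the semigroup identity ${^{\mathbb{T}}I}^{1-\alpha}_{a+}\circ{^{\mathbb{T}}I}^{\alpha}_{a+}g={^{\mathbb{T}}I}^{1}_{a+}g$, and finish with the fundamental theorem of the delta calculus. You have correctly isolated the only substantive step, the time-scale Beta evaluation
\[
\int_{s}^{t}(t-\tau)^{-\alpha}(\tau-s)^{\alpha-1}\,\triangle\tau \;=\; \Gamma(\alpha)\Gamma(1-\alpha),
\]
and your caution there is warranted rather than merely rhetorical. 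On an arbitrary time scale this identity is genuinely delicate: the Fubini interchange for iterated delta integrals over a triangular region replaces the inner lower limit $s$ by $\sigma(s)$, and at right-scattered $s$ the kernel $(\tau-s)^{\alpha-1}$ is then evaluated only away from its singularity, so the inner integral need not be constant in $s,t$ nor equal to the classical Beta value. This is precisely the known fragile point of the BHT semigroup property, and you are right to defer it to \cite{MyID:328}; once that identity is granted, the remaining steps (Fubini and the fundamental theorem for rd-continuous integrands) are standard.
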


\begin{proposition}[See \cite{MyID:328}]
\label{prop:I:D}
Let $g \in C([a, b])$, $0 <\alpha < 1$. If
${^{\mathbb{T}}I}^{1-\alpha}_{a+} u(a)=0$, then
\[
^{\mathbb{T}}I^{\alpha}_{a+}
\circ {^{\mathbb{T}}D}^{\alpha}_{a+}g= g.
\]
\end{proposition}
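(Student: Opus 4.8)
The plan is to reduce the claim to the identity ${^{\mathbb{T}}D}^{\alpha}_{a+}\circ{^{\mathbb{T}}I}^{\alpha}_{a+}g=g$ established in the preceding proposition, together with the elementary time-scale fact that a function with identically vanishing delta derivative on an interval is constant there. Write $w:={^{\mathbb{T}}I}^{\alpha}_{a+}\left({^{\mathbb{T}}D}^{\alpha}_{a+}g\right)$. For this composition to be meaningful I would first record the standing regularity assumption that ${^{\mathbb{T}}I}^{1-\alpha}_{a+}g$ is delta-differentiable with (rd-)continuous derivative, so that $\varphi:={^{\mathbb{T}}D}^{\alpha}_{a+}g=\left({^{\mathbb{T}}I}^{1-\alpha}_{a+}g\right)^{\triangle}$ is a genuine continuous function on $[a,b]$ and hence $w\in C([a,b])$.

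First I would apply ${^{\mathbb{T}}D}^{\alpha}_{a+}$ to $w$: by the preceding proposition, applied with $\varphi$ in the role of the integrated function, ${^{\mathbb{T}}D}^{\alpha}_{a+}w={^{\mathbb{T}}D}^{\alpha}_{a+}\circ{^{\mathbb{T}}I}^{\alpha}_{a+}\varphi=\varphi={^{\mathbb{T}}D}^{\alpha}_{a+}g$. Hence ${^{\mathbb{T}}D}^{\alpha}_{a+}(w-g)=0$, which by Definition~\ref{def22} means $\left({^{\mathbb{T}}I}^{1-\alpha}_{a+}(w-g)\right)^{\triangle}\equiv 0$ on $[a,b]$; therefore ${^{\mathbb{T}}I}^{1-\alpha}_{a+}(w-g)$ equals a constant $c$ on $[a,b]$. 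To pin down $c$ I would evaluate at the left endpoint: since ${^{\mathbb{T}}I}^{1-\alpha}_{a+}w(a)=\int_a^a(\cdots)\,\triangle s=0$ and ${^{\mathbb{T}}I}^{1-\alpha}_{a+}g(a)=0$ by hypothesis, we get $c=0$, that is, ${^{\mathbb{T}}I}^{1-\alpha}_{a+}(w-g)\equiv 0$. Finally, applying the preceding proposition once more, now with order $1-\alpha\in(0,1)$, to the continuous function $w-g$, yields $w-g={^{\mathbb{T}}D}^{1-\alpha}_{a+}\circ{^{\mathbb{T}}I}^{1-\alpha}_{a+}(w-g)={^{\mathbb{T}}D}^{1-\alpha}_{a+}(0)=0$, which is precisely ${^{\mathbb{T}}I}^{\alpha}_{a+}\circ{^{\mathbb{T}}D}^{\alpha}_{a+}g=g$.

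An alternative, more computational route is to write $w(t)=\int_a^t\frac{(t-s)^{\alpha-1}}{\Gamma(\alpha)}\left({^{\mathbb{T}}I}^{1-\alpha}_{a+}g\right)^{\triangle}(s)\,\triangle s$ and integrate by parts on the time scale, the endpoint contribution at $s=a$ vanishing exactly because of the hypothesis ${^{\mathbb{T}}I}^{1-\alpha}_{a+}g(a)=0$; one is then left with recombining the resulting kernel into a single fractional integral. The hard part on this second route is the non-integrable singularity of $(t-s)^{\alpha-1}$ at $s=t$, which forces working on $[a,t-\varepsilon]$ and passing to the limit $\varepsilon\to 0^{+}$, and controlling the $\sigma$-shifts produced by the time-scale integration-by-parts formula. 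The first route avoids all of this, so it is the one I would carry out; its only delicate point is the regularity hypothesis ensuring that $w$ is continuous, hence that the earlier proposition is applicable to it.
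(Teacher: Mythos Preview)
The paper does not supply a proof of this proposition: it is quoted from \cite{MyID:328} and stated without argument, so there is nothing in the present paper to compare your proposal against.

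Judged on its own, your first route is the standard proof from the classical Riemann--Liouville theory transported to time scales, and the logic is correct: from ${^{\mathbb{T}}D}^{\alpha}_{a+}\circ{^{\mathbb{T}}I}^{\alpha}_{a+}=\mathrm{id}$ (the preceding proposition) you obtain ${^{\mathbb{T}}D}^{\alpha}_{a+}(w-g)=0$, hence ${^{\mathbb{T}}I}^{1-\alpha}_{a+}(w-g)$ is constant, the constant is fixed to zero by evaluation at $t=a$ using the hypothesis, and finally $w-g$ is recovered by applying ${^{\mathbb{T}}D}^{1-\alpha}_{a+}$ and the preceding proposition again. The only real gap---which you explicitly flag---is that the bare hypothesis $g\in C([a,b])$ does not by itself guarantee that $\varphi={^{\mathbb{T}}D}^{\alpha}_{a+}g$ exists and is continuous, so that the preceding proposition can be applied to it; some regularity like ${^{\mathbb{T}}I}^{1-\alpha}_{a+}g\in C^{1}$ is implicitly needed. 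This is a deficiency of the statement as recorded here (note also the typo $u$ for $g$ in the hypothesis), not of your argument, and the cited source presumably supplies the missing assumption. Your sketched integration-by-parts alternative is the other textbook approach and would work as well, with exactly the singular-endpoint and $\sigma$-shift difficulties you mention.
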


\begin{theorem}[See \cite{MyID:328}]
Let $g \in C([a, b])$, $\alpha > 0$, and $_{a}^{\mathbb{T}}I^{\alpha}_{t}([a, b])$
be the space of functions that can be represented by the Riemann--Liouville
$\triangle$-integral of order $\alpha$ of some $C([a, b])$-function. Then,
\[
g \in\, _{a}^{\mathbb{T}}I^{\alpha}_{t}([a, b])
\]
if and only if
\[
^{\mathbb{T}}I^{1-\alpha}_{a+} g \in C^{1}([a, b])
\]
and
\[
^{\mathbb{T}}I^{1-\alpha}_{a+} g(a) =0.
\]
\end{theorem}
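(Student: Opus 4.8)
The statement is an ``if and only if'' for membership in the range of the Riemann--Liouville $\triangle$-integral operator, so I would split the argument into the two implications and exploit the compositional propositions recalled just above (the two preceding propositions, and Proposition~\ref{prop:I:D} in particular). For the \emph{forward} direction, assume $g = {^{\mathbb{T}}I}^{\alpha}_{a+}\varphi$ for some $\varphi \in C([a,b])$. Then apply ${^{\mathbb{T}}I}^{1-\alpha}_{a+}$ to both sides and use the semigroup property of fractional integration on time scales to get ${^{\mathbb{T}}I}^{1-\alpha}_{a+} g = {^{\mathbb{T}}I}^{1-\alpha}_{a+} {^{\mathbb{T}}I}^{\alpha}_{a+}\varphi = {^{\mathbb{T}}I}^{1}_{a+}\varphi = \int_a^{\cdot}\varphi(s)\,\triangle s$. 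This last object is a $\triangle$-antiderivative of a continuous function, hence lies in $C^{1}([a,b])$, and it vanishes at $t=a$ because the integral over a degenerate interval is zero. That establishes both conclusions on the right-hand side.

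For the \emph{converse}, suppose $v := {^{\mathbb{T}}I}^{1-\alpha}_{a+} g \in C^{1}([a,b])$ with $v(a)=0$. The plan is to show that $g = {^{\mathbb{T}}I}^{\alpha}_{a+}\bigl(v^{\triangle}\bigr)$, which exhibits $g$ as the fractional integral of the continuous function $v^{\triangle}$ and hence places $g$ in $_{a}^{\mathbb{T}}I^{\alpha}_{t}([a,b])$. To see this, note first that by Definition~\ref{def22}, ${^{\mathbb{T}}D}^{\alpha}_{a+} g = v^{\triangle}$, which is continuous. Since $v(a) = {^{\mathbb{T}}I}^{1-\alpha}_{a+} g(a) = 0$, the hypothesis of Proposition~\ref{prop:I:D} is met (with $\alpha$ there playing the role of our $\alpha$), so ${^{\mathbb{T}}I}^{\alpha}_{a+}\circ{^{\mathbb{T}}D}^{\alpha}_{a+} g = g$; combining the two displays gives $g = {^{\mathbb{T}}I}^{\alpha}_{a+}\bigl(v^{\triangle}\bigr)$ with $v^{\triangle}\in C([a,b])$, as desired.

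The step I expect to require the most care is the semigroup identity ${^{\mathbb{T}}I}^{1-\alpha}_{a+}\circ{^{\mathbb{T}}I}^{\alpha}_{a+} = {^{\mathbb{T}}I}^{1}_{a+}$ on time scales, since the classical proof rests on a Fubini/Dirichlet interchange and a Beta-function computation, and the analogous manipulation of the iterated $\triangle$-integral with the singular kernels $(t-s)^{-\alpha}$ and $(s-\tau)^{\alpha-1}$ is where measure-theoretic subtleties on a general closed set $\mathbb{T}$ could bite; if the earlier propositions already encode enough of the calculus (in particular the cancellation law ${^{\mathbb{T}}D}^{\alpha}_{a+}\circ{^{\mathbb{T}}I}^{\alpha}_{a+} = \mathrm{id}$), I would instead argue via that cancellation together with Proposition~\ref{prop:I:D} and avoid re-deriving the semigroup law from scratch. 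Beyond that, everything is a matter of bookkeeping: continuity of $v^{\triangle}$ is immediate from $v\in C^{1}$, and the boundary evaluation $v(a)=0$ transfers verbatim between the two formulations.
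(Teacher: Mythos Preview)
The paper does not give its own proof of this theorem: it is stated with the attribution ``See \cite{MyID:328}'' and no argument follows. So there is nothing in the present paper to compare your proposal against; the result is quoted from Benkhettou, Hammoudi and Torres.

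That said, your two-implication strategy is the natural one and matches how such characterizations are typically established. The converse direction is clean: once $v = {^{\mathbb{T}}I}^{1-\alpha}_{a+} g \in C^{1}$ with $v(a)=0$, Definition~\ref{def22} gives ${^{\mathbb{T}}D}^{\alpha}_{a+} g = v^{\triangle}$, and Proposition~\ref{prop:I:D} then yields $g = {^{\mathbb{T}}I}^{\alpha}_{a+}(v^{\triangle})$ exactly as you wrote. For the forward direction you are right that the semigroup law ${^{\mathbb{T}}I}^{1-\alpha}_{a+}\circ{^{\mathbb{T}}I}^{\alpha}_{a+} = {^{\mathbb{T}}I}^{1}_{a+}$ is the crux, and that on a general time scale the Fubini/Beta computation needs to be justified for the $\triangle$-measure; this is indeed proved in \cite{MyID:328}, so you may cite it rather than redo it. One small point: the theorem is stated for $\alpha>0$, but both $I^{1-\alpha}$ and Proposition~\ref{prop:I:D} as recalled here presuppose $0<\alpha<1$, so your argument implicitly works in that range---which is the only range used in the rest of the paper anyway.
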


The following result of the calculus on time scales is also useful.

\begin{proposition}[See \cite{ahmad}]
\label{proposition252}
Let $\mathbb{T}$ be a time scale and $g$ an increasing continuous
function on the time-scale interval $[a, b]$. If $G$ is the extension
of $g$ to the real interval $[a, b]$ defined by
$$
G(s):=
\begin{cases}
g(s) & \mbox{ if } s \in \mathbb{T},\\
g(t) &  \mbox{ if } s \in (t, \sigma(t)) \notin  \mathbb{T},
\end{cases}
$$
then
\[
\int_{a}^{b} g(t) \triangle t \leq  \int_{a}^{b} G(t)  dt,
\]
where $\sigma: \mathbb{T} \rightarrow \mathbb{T}$ is the forward
jump operator of $\mathbb{T}$ defined by
$$
\sigma(t):= \inf \{ s \in \mathbb{T}: s > t\}.
$$
\end{proposition}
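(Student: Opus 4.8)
The plan is to express each of the two integrals through the gap structure of $\mathbb{T}$ and observe that the resulting expressions coincide. Write $[a,b]\cap\mathbb{T}$ as $[a,b]$ minus the (at most countably many, pairwise disjoint) gaps $(t_i,\sigma(t_i))$ determined by the right-scattered points $t_i\in[a,b)$; since $a,b\in\mathbb{T}$, each such gap lies inside $[a,b]$ and $[a,b]\setminus\mathbb{T}=\bigcup_i(t_i,\sigma(t_i))$. Because $g$ is continuous on the compact set $[a,b]\cap\mathbb{T}$ it is bounded there, say $|g|\le K$, while $\sum_i\bigl(\sigma(t_i)-t_i\bigr)$ equals the Lebesgue measure of $[a,b]\setminus\mathbb{T}$, hence is at most $b-a$.

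First I would record the decomposition of the $\triangle$-integral of a continuous function,
\[
\int_a^b g(t)\,\triangle t=\int_{[a,b]\cap\mathbb{T}}g(t)\,dt+\sum_i\bigl(\sigma(t_i)-t_i\bigr)g(t_i),
\]
which follows from the elementary identity $\int_a^{\sigma(t)}g\,\triangle t=\int_a^t g\,\triangle t+\bigl(\sigma(t)-t\bigr)g(t)$ applied at each right-scattered point, together with the fact that on the remaining (dense) part of $[a,b]$ the $\triangle$-integral reduces to the ordinary Lebesgue integral over $\mathbb{T}$. Next I would compute the right-hand side directly: since $g$ is increasing, its step extension $G$ is increasing on $[a,b]$, hence Riemann integrable there, and because $G=g$ on $[a,b]\cap\mathbb{T}$ while $G\equiv g(t_i)$ on each gap $(t_i,\sigma(t_i))$,
\[
\int_a^b G(t)\,dt=\int_{[a,b]\cap\mathbb{T}}g(t)\,dt+\sum_i\int_{t_i}^{\sigma(t_i)}g(t_i)\,dt=\int_{[a,b]\cap\mathbb{T}}g(t)\,dt+\sum_i\bigl(\sigma(t_i)-t_i\bigr)g(t_i).
\]
Comparing the two displays yields $\int_a^b g(t)\,\triangle t=\int_a^b G(t)\,dt$, which in particular gives the asserted inequality. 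Monotonicity of $g$ enters only to guarantee integrability of $G$ and to legitimise the gap-by-gap integration of $G$, and the proposition is phrased as an inequality because that is all that is needed for our purposes.

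The step I expect to be the main obstacle is the passage through \emph{infinitely many} gaps: both the decomposition of $\int_a^b g\,\triangle t$ and the term-by-term integration of $G$ over $\bigcup_i(t_i,\sigma(t_i))$ amount to interchanging a limit with an integral. I would justify this by absolute convergence, using $\sum_i\bigl(\sigma(t_i)-t_i\bigr)|g(t_i)|\le K\sum_i\bigl(\sigma(t_i)-t_i\bigr)\le K(b-a)<\infty$; then either a dominated-convergence argument on the sequence of finite partial unions of gaps finishes it, or, if one prefers to avoid measure theory, one truncates to the finitely many gaps of largest length (an elementary case) and lets the total length of the discarded gaps tend to $0$, the induced error on each side being bounded by a constant multiple of that length.
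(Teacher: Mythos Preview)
The paper does not supply its own proof of this proposition: it is stated with a citation to \cite{ahmad} and used as a black box, so there is nothing in the paper to compare your argument against.

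That said, your argument is sound. The gap decomposition you invoke is exactly the standard relationship between the $\triangle$-integral and the Lebesgue integral on $[a,b]$, and your handling of the possibly infinite sum via the bound $\sum_i(\sigma(t_i)-t_i)\le b-a$ together with $|g|\le K$ is the right way to justify the interchange. You actually establish the \emph{equality} $\int_a^b g(t)\,\triangle t = \int_a^b G(t)\,dt$, which is stronger than the stated inequality; this is not a flaw, and your closing remark acknowledging this is appropriate. One minor point: you appeal to monotonicity of $g$ to guarantee Riemann integrability of $G$, but in fact boundedness of $g$ already makes $G$ bounded and measurable (it is a step function on the gaps and equals $g$ on the closed set $[a,b]\cap\mathbb{T}$), so Lebesgue integrability is immediate even without monotonicity. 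Monotonicity is therefore not essential to your argument, though it does no harm.
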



\subsection{Existence}

We begin by giving an integral representation to our problem \eqref{eq12}.
Note that the case $0<\alpha<\frac{1}{2}$ is coherent
with our fractional operators with $2 \alpha -1 < 0$.

\begin{lemma}
Let $ 0< \alpha < \frac{1}{2}$. Problem
\eqref{eq12} is equivalent to
\begin{equation}
\label{eq22}
u(t) = \frac{\lambda}{\Gamma (2 \alpha )} \int_{t_{0}}^{t} (t-s)^{2 \alpha -1}
\frac{f(u(s))}{\left(\int_{t_{0}}^{T} f(u)\, \triangle x\right)^{2}} \triangle s.
\end{equation}
\end{lemma}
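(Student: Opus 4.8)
The goal is to show that the nonlocal dynamic problem \eqref{eq12} on the time scale $\mathbb{T}$ is equivalent to the integral equation \eqref{eq22}, for $0<\alpha<\frac12$ (so that $2\alpha\in(0,1)$). The natural strategy is to apply the fractional operator $^{\mathbb{T}}I^{2\alpha}_{t_0+}$ to both sides of the dynamic equation and invoke the inversion formula from Proposition~\ref{prop:I:D}, which requires precisely the initial condition $^{\mathbb{T}}I^{1-2\alpha}_{t_0+}u(t_0)=0$ that is built into \eqref{eq12}.

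\begin{proof}
Assume first that $u$ solves \eqref{eq12}. Applying the operator $^{\mathbb{T}}I^{2\alpha}_{t_0+}$ to both sides of the dynamic equation ${^{\mathbb{T}}D}^{2\alpha}_{t_0+}u(t)=\lambda f(u(t))\big(\int_{t_0}^{T}f(u(x))\,\triangle x\big)^{-2}$ and using that, by the second relation in \eqref{eq12}, $^{\mathbb{T}}I^{\beta}_{t_0+}u(t_0)=0$ for all $\beta\in(0,1)$ — in particular for $\beta=1-2\alpha$ — Proposition~\ref{prop:I:D} gives $^{\mathbb{T}}I^{2\alpha}_{t_0+}\circ{^{\mathbb{T}}D}^{2\alpha}_{t_0+}u=u$. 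Hence
\[
u(t)={^{\mathbb{T}}I}^{2\alpha}_{t_0+}\left\{\frac{\lambda f(u)}{\left(\int_{t_0}^{T}f(u)\,\triangle x\right)^{2}}\right\}
=\frac{\lambda}{\Gamma(2\alpha)}\int_{t_0}^{t}(t-s)^{2\alpha-1}\frac{f(u(s))}{\left(\int_{t_0}^{T}f(u)\,\triangle x\right)^{2}}\triangle s,
\]
where the last equality is just Definition~\ref{def12} with $\alpha$ replaced by $2\alpha$; note the denominator $\int_{t_0}^{T}f(u)\,\triangle x$ is a constant with respect to $s$ and may be pulled outside the $\triangle$-integral. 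This is exactly \eqref{eq22}.

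Conversely, suppose $u$ satisfies \eqref{eq22}, i.e. $u={^{\mathbb{T}}I}^{2\alpha}_{t_0+}g$ where $g(t)=\lambda f(u(t))\big(\int_{t_0}^{T}f(u(x))\,\triangle x\big)^{-2}$. Since $f$ is continuous and bounded below by a positive constant (the hypotheses on $f$ guarantee $\int_{t_0}^{T}f(u)\,\triangle x>0$, so $g$ is well defined and continuous), the Theorem following Proposition~\ref{prop:I:D} characterizing $_{a}^{\mathbb{T}}I^{\alpha}_{t}([a,b])$ applies with $\alpha$ replaced by $2\alpha$: as $u\in{^{\mathbb{T}}I}^{2\alpha}_{t_0+}(C)$, we get both $^{\mathbb{T}}I^{1-2\alpha}_{t_0+}u\in C^1$ and $^{\mathbb{T}}I^{1-2\alpha}_{t_0+}u(t_0)=0$. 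The latter, together with the semigroup-type property of the Riemann--Liouville fractional integral on time scales, yields $^{\mathbb{T}}I^{\beta}_{t_0+}u(t_0)=0$ for every $\beta\in(0,1)$, recovering the initial condition. Finally, applying $^{\mathbb{T}}D^{2\alpha}_{t_0+}$ to \eqref{eq22} and using ${^{\mathbb{T}}D}^{2\alpha}_{t_0+}\circ{^{\mathbb{T}}I}^{2\alpha}_{t_0+}g=g$ (the Proposition stating $^{\mathbb{T}}D^{\alpha}_{a+}\circ{^{\mathbb{T}}I}^{\alpha}_{a+}g=g$) recovers the dynamic equation in \eqref{eq12}. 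Thus \eqref{eq12} and \eqref{eq22} are equivalent on the indicated function space.
\end{proof}

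The step I expect to require the most care is the converse direction, specifically verifying that a solution of the integral equation automatically lies in the range of $^{\mathbb{T}}I^{2\alpha}_{t_0+}$ acting on continuous functions, so that the characterization theorem can be invoked to retrieve the vanishing condition $^{\mathbb{T}}I^{1-2\alpha}_{t_0+}u(t_0)=0$ and hence the full family of initial conditions $\forall\beta\in(0,1)$. This is where the restriction $0<\alpha<\frac12$ is genuinely used (ensuring $2\alpha<1$ and that the exponent $2\alpha-1$ is negative, consistent with the remark preceding the lemma), and where one must be careful that the nonlocal denominator does not spoil the continuity or the membership in the appropriate weighted space — here the two-sided bound $c_1\le f(u)\le c_2$ is what saves the argument.
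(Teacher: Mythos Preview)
Your proof is correct and follows essentially the same approach as the paper: both hinge on Proposition~\ref{prop:I:D} (the identity $^{\mathbb{T}}I^{2\alpha}_{t_0+}\circ{^{\mathbb{T}}D}^{2\alpha}_{t_0+}u=u$ under the initial condition $^{\mathbb{T}}I^{1-2\alpha}_{t_0+}u(t_0)=0$) to pass from the dynamic equation to the integral form. Your version is in fact more complete, since you treat the converse direction explicitly---invoking ${^{\mathbb{T}}D}^{2\alpha}_{t_0+}\circ{^{\mathbb{T}}I}^{2\alpha}_{t_0+}g=g$ and the characterization theorem to recover the initial data---whereas the paper's proof is a terse one-line appeal to the inversion formula.
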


\begin{proof}
We have
\begin{equation*}
\begin{split}
^{\mathbb{T}}D^{2 \alpha}_{t_{0+}} u(t)
&=  \frac{\lambda}{\Gamma ( 2 \alpha )} \left(
\int_{t_{0}}^{t} (t-s)^{2 \alpha -1} \frac{f(u(s))}{\left(
\int_{t_{0}}^{T} f(u)\, \triangle x\right)^{2}}\triangle s\right )^{\triangle}\\
&= \left( ^{\mathbb{T}}I^{1- 2 \alpha}_{t_{0+}} u(t) \right )^{\triangle}
= \left(\triangle \, \circ \, ^{\mathbb{T}}I^{1- 2 \alpha}_{t_{0+}} \right) u(t).
\end{split}
\end{equation*}
The result follows from Proposition~\ref{prop:I:D}:
$^{\mathbb{T}}I^{2 \alpha}_{t_{0+}}  \circ
\left ( ^{\mathbb{T}}D^{ 2 \alpha}_{t_{0+}} (u) \right) = u$.
\end{proof}

For the sake of simplicity, we take $t_{0}=0$.
It is easy to see that \eqref{eq12} has a solution
$u=u(t)$ if and only if $u$ is a fixed point of the operator
$K: X \rightarrow X $ defined by
\begin{equation}
\label{eq32}
Ku(t)=  \frac{\lambda}{\Gamma ( 2 \alpha )} \int_{0}^{t} (t-s)^{2 \alpha -1}
\frac{f(u(s))}{\left(\int_{0}^{T} f(u)\, \triangle x\right)^{2}} \triangle s.
\end{equation}

To prove existence of solution,
we begin by showing that the operator $K$ defined
by \eqref{eq32} verifies the conditions of Schauder's
fixed point theorem.

\begin{lemma}
\label{m:lema2}
The operator $K$ is continuous.
\end{lemma}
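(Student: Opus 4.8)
The plan is to show that $K$ maps $X=C([0,T])$ into itself and is continuous with respect to the weighted norm $\|x\|=\sup_{t\in[0,T]}\{e^{-Nt}|x(t)|\}$, exactly as in the proof of Lemma~\ref{lemma:in1} for the Caputo case. First I would fix $u\in X$ and check that $Ku\in X$: since $f$ is continuous and bounded below by $c_1>0$ (hypothesis $(H_1)$ in the time-scale section, which we should assume in the same form $c_1\le f(u)\le c_2$), the denominator $\left(\int_0^T f(u)\,\triangle x\right)^2$ is bounded away from zero, and the kernel $(t-s)^{2\alpha-1}$ is $\triangle$-integrable on $[0,t]$ because $2\alpha-1>-1$; continuity of $t\mapsto Ku(t)$ then follows from standard continuity properties of the fractional $\triangle$-integral (Definition~\ref{def12}).

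Next I would take a sequence $u_n\to u$ in $X$ and estimate $\|Ku_n-Ku\|$. The key algebraic step is to split the difference
\[
\frac{f(u_n(s))}{\left(\int_0^T f(u_n)\triangle x\right)^2}
-\frac{f(u(s))}{\left(\int_0^T f(u)\triangle x\right)^2}
\]
by adding and subtracting $\dfrac{f(u(s))}{\left(\int_0^T f(u_n)\triangle x\right)^2}$, so that one term involves $|f(u_n(s))-f(u(s))|$ and the other involves the difference of the two denominators, which factors as
\[
\left(\int_0^T f(u)\triangle x\right)^2-\left(\int_0^T f(u_n)\triangle x\right)^2
=\left(\int_0^T(f(u)-f(u_n))\triangle x\right)\left(\int_0^T(f(u)+f(u_n))\triangle x\right).
\]
Using the Lipschitz property of $f$ with constant $L_f$, the bounds $c_1\le f\le c_2$ on the denominators, and $|u_n(s)-u(s)|\le e^{Ns}\|u_n-u\|$, each piece is controlled by a constant (depending on $c_1,c_2,L_f,T,\lambda,\alpha$) times $\|u_n-u\|$ times $\int_0^t (t-s)^{2\alpha-1}e^{Ns}\triangle s$. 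Multiplying by $e^{-Nt}$ and bounding $\int_0^t(t-s)^{2\alpha-1}e^{-N(t-s)}\triangle s$ by a finite constant (this is where the weighted norm does its job, as noted in the paper) gives $\|Ku_n-Ku\|\le C\|u_n-u\|\to 0$.

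The one genuinely time-scale-specific obstacle is justifying the estimate $\int_0^t (t-s)^{2\alpha-1}e^{Ns}\triangle s\le$ (finite constant) and, more basically, that this $\triangle$-integral is well defined and finite despite the singularity of $(t-s)^{2\alpha-1}$ at $s=t$: on a general time scale the integrand need not be bounded near the upper limit, so I would invoke Proposition~\ref{proposition252} to dominate the $\triangle$-integral by the corresponding Lebesgue integral of the extension $G$, for which $\int_0^t(t-s)^{2\alpha-1}e^{Ns}\,ds$ converges because $2\alpha-1>-1$. Once that domination is in place the rest is a routine repetition of the Riemann--Liouville and Caputo arguments already given, so I expect the write-up to be short; the only care needed is to keep the dependence on the fixed $N$ explicit so that it feeds correctly into the later contraction/uniqueness argument for Theorems~\ref{thm12} and~\ref{corollary2}.
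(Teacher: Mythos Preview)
The paper does not actually supply a proof of this lemma; it is stated bare and the reader is referred to \cite{MyID:365} for details. So there is nothing to compare line by line, and your outline is a reasonable reconstruction of the standard argument.

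Two remarks. First, the weighted norm with parameter $N$ is a device from Section~\ref{sec:RL}; in the time-scale section the uniqueness constant in Theorem~\ref{corollary2} carries no $N$, which indicates the ordinary sup norm on $C([0,T])$ is in play there. Since the two norms are equivalent this does not affect continuity, but your sentence about ``keeping the dependence on the fixed $N$ explicit so that it feeds correctly into Theorems~\ref{thm12} and~\ref{corollary2}'' is misplaced: those theorems do not use $N$. You can drop the $e^{-Nt}$ weights entirely and simply bound $\int_0^t (t-s)^{2\alpha-1}\,\triangle s$ by $T^{2\alpha}/(2\alpha)$; the argument becomes shorter and matches the form of the constant in Theorem~\ref{corollary2}.

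Second, your invocation of Proposition~\ref{proposition252} needs a word of care: the proposition is stated for an \emph{increasing continuous} function $g$ on $[a,b]$, whereas $(t-s)^{2\alpha-1}$ blows up at $s=t$. The monotonicity is fine (with $2\alpha-1<0$ the map $s\mapsto(t-s)^{2\alpha-1}$ is increasing), but to meet the continuity hypothesis you should apply the proposition on $[0,t-\varepsilon]$ and pass to the limit $\varepsilon\to 0^{+}$, or else quote directly that the $\triangle$-integral is dominated by the Lebesgue integral for nonnegative integrands. This is routine, but as written your appeal to Proposition~\ref{proposition252} skips over its hypotheses.
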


\begin{lemma}
The operator $K$ sends bounded sets into bounded sets on
$\mathbb{C}([0, T], \mathbb{R})$.
\end{lemma}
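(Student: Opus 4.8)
The plan is to show that $K$ maps any bounded subset of $C([0,T],\mathbb{R})$ into a bounded set by a direct estimate on $Ku(t)$, using the lower bound on $f$ to control the nonlocal denominator and the upper bound on $f$ (or rather hypothesis $(H_2)$) to control the numerator. First I would fix a bounded set $B\subset C([0,T],\mathbb{R})$, say $\|u\|\le R$ for all $u\in B$, and write
\[
|Ku(t)| \le \frac{\lambda}{\Gamma(2\alpha)} \int_{0}^{t} (t-s)^{2\alpha-1}
\frac{|f(u(s))|}{\left(\int_{0}^{T} f(u)\,\triangle x\right)^{2}} \triangle s .
\]
By hypothesis $(H_1)$ we have $f(u(x))\ge c_{1}>0$, so $\int_{0}^{T} f(u)\,\triangle x \ge c_{1}T$, whence the denominator is bounded below by $(c_{1}T)^{2}$. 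For the numerator, $(H_1)$ gives $f(u(s))\le c_{2}$. Therefore
\[
|Ku(t)| \le \frac{\lambda c_{2}}{\Gamma(2\alpha)(c_{1}T)^{2}}
\int_{0}^{t} (t-s)^{2\alpha-1}\,\triangle s .
\]

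Next I would bound the remaining delta-integral $\int_{0}^{t}(t-s)^{2\alpha-1}\triangle s$ uniformly in $t\in[0,T]$. Since $2\alpha-1<0$, the integrand is decreasing in $s$, and here is where Proposition~\ref{proposition252} comes in: comparing the time-scale integral with the corresponding Lebesgue integral of the extension, one gets
\[
\int_{0}^{t}(t-s)^{2\alpha-1}\triangle s \le \int_{0}^{t}(t-s)^{2\alpha-1}\,ds
= \frac{t^{2\alpha}}{2\alpha} \le \frac{T^{2\alpha}}{2\alpha}.
\]
Combining the two displays yields $\|Ku\| \le \dfrac{\lambda c_{2}T^{2\alpha}}{2\alpha\,\Gamma(2\alpha)(c_{1}T)^{2}} = \dfrac{\lambda c_{2}T^{2\alpha}}{\Gamma(2\alpha+1)(c_{1}T)^{2}}$, a constant independent of $u\in B$, so $K(B)$ is bounded. (If the weighted norm $\|x\|=\sup_t e^{-Nt}|x(t)|$ is used instead, the same conclusion holds with an extra harmless factor.)

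The main obstacle I anticipate is the careful justification of the monotonicity/extension step: one must verify that $s\mapsto (t-s)^{2\alpha-1}$ is indeed increasing on $[0,t]$ so that Proposition~\ref{proposition252} applies in the right direction, and handle the mild singularity at $s=t$ (the extension $G$ is defined through an increasing continuous integrand, so one should apply the proposition to an appropriate increasing rearrangement or split off a neighbourhood of $t$). Everything else is a routine chain of inequalities using only $(H_1)$ and the positivity of the gamma function; no fixed-point machinery is needed for this lemma, since it is merely one of the hypotheses to be fed later into Schauder's theorem together with Lemma~\ref{m:lema2} and the (forthcoming) equicontinuity estimate.
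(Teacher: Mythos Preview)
Your approach is correct and is the natural one. The paper itself does not supply a proof for this lemma in the survey text; it merely states the result and refers to \cite{MyID:365} for details. Your estimate via $(H_1)$ to control both numerator and denominator, followed by Proposition~\ref{proposition252} to pass from the $\triangle$-integral to the Lebesgue integral, is exactly the expected route and matches the shape of the constants that surface later in Theorem~\ref{corollary2} (namely $\frac{T^{2\alpha}}{\Gamma(2\alpha+1)(c_1T)^2}$). One remark: your final bound does not depend on $R$ at all, so in fact $K$ maps all of $C([0,T])$ into a single bounded set; the hypothesis $\|u\|\le R$ is never invoked because $(H_1)$ bounds $f$ uniformly. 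The singularity you flag at $s=t$ is a genuine technical point but is handled routinely: truncate at $t-\varepsilon$, apply Proposition~\ref{proposition252} to the increasing continuous function on $[0,t-\varepsilon]$, and let $\varepsilon\to 0$ using monotone convergence, since the limiting Lebesgue integral $\int_0^t (t-s)^{2\alpha-1}\,ds=t^{2\alpha}/(2\alpha)$ is finite.
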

\begin{lemma}
Operator $K$ sends bounded sets into equicontinuous
sets of $\mathbb{C}(I, \mathbb{R})$.
\end{lemma}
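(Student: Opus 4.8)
The plan is to verify the classical Arzelà--Ascoli hypothesis that the image $K(B)$ of a bounded set $B \subset \mathbb{C}(I, \mathbb{R})$ is equicontinuous, where $I = [0,T]$ and $K$ is given by \eqref{eq32}. Fix $R > 0$ and let $B = \{ u \in \mathbb{C}(I,\mathbb{R}) : \| u \| \leq R \}$. First I would record the uniform bound that comes from $(H_1)$: since $c_1 \le f(u) \le c_2$ pointwise, the nonlocal denominator satisfies $\left( \int_0^T f(u)\, \triangle x \right)^2 \ge (c_1 T)^2$, so for every $u \in B$ and every $t \in I$ the integrand of $Ku$ is dominated in absolute value by $\lambda c_2 (c_1 T)^{-2} (t-s)^{2\alpha-1}/\Gamma(2\alpha)$. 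This is exactly the estimate that the previous lemma uses to send bounded sets to bounded sets, and it is the tool for equicontinuity as well.

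Next I would take $0 \le t_1 < t_2 \le T$ with $u \in B$ and split $Ku(t_2) - Ku(t_1)$ into the two standard pieces: the ``overlap'' integral $\int_0^{t_1} \big[(t_2 - s)^{2\alpha-1} - (t_1 - s)^{2\alpha-1}\big] \cdot (\text{integrand factor})\, \triangle s$ and the ``tail'' integral $\int_{t_1}^{t_2} (t_2 - s)^{2\alpha-1} \cdot (\text{integrand factor})\, \triangle s$. Using the uniform bound above, the tail piece is controlled by a constant times $\int_{t_1}^{t_2} (t_2-s)^{2\alpha-1}\, \triangle s$, and Proposition~\ref{proposition252} lets me pass from the $\triangle$-integral to the ordinary Lebesgue integral $\int_{t_1}^{t_2}(t_2-s)^{2\alpha-1}\, ds = \frac{1}{2\alpha}(t_2-t_1)^{2\alpha}$, which tends to $0$ as $t_2 - t_1 \to 0$ uniformly in $u$. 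For the overlap piece I would again bound the integrand factor uniformly and, since $2\alpha - 1 < 0$ makes $s \mapsto (t_1-s)^{2\alpha-1} - (t_2-s)^{2\alpha-1}$ nonnegative, use Proposition~\ref{proposition252} to dominate the $\triangle$-integral by the corresponding Lebesgue integral, which evaluates to a constant times $\big(t_1^{2\alpha} - t_2^{2\alpha} + (t_2-t_1)^{2\alpha}\big)$; this is again $o(1)$ as $t_2 - t_1 \to 0$, uniformly over $B$. Adding the two bounds gives a modulus of continuity independent of $u$, which is precisely equicontinuity of $K(B)$.

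The main obstacle is the care needed in passing between the time-scale $\triangle$-integral and the ordinary Lebesgue integral via Proposition~\ref{proposition252}: that proposition requires the integrand to be an increasing continuous function of the time variable, whereas here the relevant integrands $s \mapsto (t-s)^{2\alpha-1}$ are \emph{decreasing} in $s$ on $[0,t)$ and the difference kernel in the overlap term has its own monotonicity behavior. I would handle this by writing each kernel as a difference of the constant upper bound and a decreasing function, or equivalently by applying the proposition after a change of variable $s \mapsto t - s$ that restores monotonicity, and by noting that on $[0,t_1]$ the function $(t_1-s)^{2\alpha-1} - (t_2-s)^{2\alpha-1}$ is indeed increasing in $s$ (its derivative has the right sign because $2\alpha-1<0$), so the proposition applies directly to the overlap term. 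Once these monotonicity bookkeeping points are settled, the remaining estimates are the same power-function computations as in the classical $\mathbb{T} = \mathbb{R}$ case, and the conclusion follows.
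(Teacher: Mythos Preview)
Your approach is correct and is essentially the standard argument; the paper, being a survey, omits the proof entirely here (it states the lemma and then immediately invokes Schauder's theorem, referring to \cite{MyID:365} for details), so there is no alternative route to compare against. One minor slip worth flagging: you write that $s\mapsto (t-s)^{2\alpha-1}$ is \emph{decreasing} on $[0,t)$, but since $2\alpha-1<0$ this map is actually \emph{increasing} (its $s$-derivative is $(1-2\alpha)(t-s)^{2\alpha-2}>0$), so Proposition~\ref{proposition252} applies to the tail term directly without the change of variable you propose as a workaround; your analysis of the overlap kernel $(t_1-s)^{2\alpha-1}-(t_2-s)^{2\alpha-1}$ being nonnegative and increasing is already correct. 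The only genuine technical care required is that the increasing kernel is singular at the upper limit, so strictly speaking the continuity hypothesis of Proposition~\ref{proposition252} fails at that endpoint; the usual fix---restrict to $[t_1,t_2-\varepsilon]$ and let $\varepsilon\downarrow 0$, using monotone convergence on both sides---closes this with no difficulty.
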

It follows by Schauder's fixed point theorem
that \eqref{eq12} has a solution on $I$. 
We have just proved Theorem~\ref{thm12}.

\begin{theorem}[Existence of solution]
\label{thm12}
Let $ 0< \alpha < \frac{1}{2}$
and $f$ satisfies hypothesis $(H_1)$.
Then there exists a  solution $u \in X $ of
\eqref{eq12} for all $\lambda > 0$.
\end{theorem}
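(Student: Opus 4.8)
The plan is to verify the three hypotheses of Schauder's fixed point theorem (Lemma~\ref{lem2.3 }) for the operator $K$ defined in \eqref{eq32}, acting on a suitable closed bounded convex subset of $X = C([0,T],\mathbb{R})$. First I would fix a radius $R>0$ and set $B_R = \{u \in X : \|u\| \leq R\}$, which is nonempty, closed, bounded, and convex. The crucial point is that the nonlocal denominator is harmless: by hypothesis $(H_1)$ we have $c_1 \leq f(u) \leq c_2$ pointwise, so $\bigl(\int_0^T f(u)\,\triangle x\bigr)^2 \geq (c_1 T_*)^2$ for the appropriate time-scale measure of $[0,T]$, and likewise $f(u(s)) \leq c_2$ in the numerator. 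Hence for any $u \in X$ and $t \in [0,T]$,
\[
|Ku(t)| \leq \frac{\lambda}{\Gamma(2\alpha)} \cdot \frac{c_2}{(c_1 T_*)^2} \int_0^t (t-s)^{2\alpha-1}\,\triangle s
\leq \frac{\lambda c_2}{\Gamma(2\alpha+1)(c_1 T_*)^2}\, T^{2\alpha},
\]
using Proposition~\ref{proposition252} to bound the delta-integral $\int_0^t (t-s)^{2\alpha-1}\triangle s$ by the corresponding Riemann integral $t^{2\alpha}/(2\alpha)$ (note $2\alpha-1<0$, so one must be slightly careful and apply the monotone-extension estimate to the relevant increasing quantity, or integrate over $[0,t)$ exactly as in the proof of Theorem~\ref{thm3.1}). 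Choosing $R$ to be this last bound gives $K(B_R) \subseteq B_R$ for every $\lambda > 0$ — this is why no smallness condition on $\lambda$ is needed here, in contrast to Theorem~\ref{thm11}.

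Next I would invoke the three lemmas stated just before Theorem~\ref{thm12}: continuity of $K$ (Lemma~\ref{m:lema2}), the fact that $K$ maps bounded sets to bounded sets, and the fact that $K$ maps bounded sets to equicontinuous sets. Continuity follows from the Lipschitz bound on $f$ together with the uniform lower bound $c_1$ on the denominator, which makes the map $u \mapsto f(u)/(\int_0^T f(u)\triangle x)^2$ Lipschitz on $X$; boundedness is the estimate above; and equicontinuity follows by estimating $|Ku(t_2) - Ku(t_1)|$ via the standard splitting of $\int_0^{t_2}(t_2-s)^{2\alpha-1} - \int_0^{t_1}(t_1-s)^{2\alpha-1}$, again reducing the delta-integrals to ordinary integrals through Proposition~\ref{proposition252} and using $c_1 \leq f \leq c_2$ to control the integrand uniformly in $u \in B_R$. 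By Lemma~\ref{lem2.2}, boundedness plus equicontinuity gives that $K(B_R)$ is precompact, so $K : B_R \to B_R$ is completely continuous.

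Finally, Schauder's fixed point theorem yields a fixed point $u \in B_R$ of $K$, and by the equivalence lemma preceding the theorem (the integral representation \eqref{eq22}), this $u$ is a solution of \eqref{eq12} on $I = [0,T]$. Since $u \in B_R \subset X$, we conclude there is a solution $u \in X$ of \eqref{eq12} for every $\lambda > 0$, completing the proof.

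I expect the main technical obstacle to be the correct handling of the time-scale delta-integrals with the singular kernel $(t-s)^{2\alpha-1}$ when $2\alpha - 1 < 0$: Proposition~\ref{proposition252} is stated for increasing continuous integrands, so one cannot apply it directly to $s \mapsto (t-s)^{2\alpha-1}$ (which is increasing in $s$ on $[0,t)$ but blows up at $s=t$), and some care — a change of variable or a limiting/truncation argument near the endpoint, mirroring exactly what is done in the proof of Theorem~\ref{thm3.1} — is required to legitimately pass to the bound $T^{2\alpha}/(2\alpha)$. Everything else is a routine transcription of the arguments already carried out for the Caputo problem in Section~\ref{sec:Cap}.
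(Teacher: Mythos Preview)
Your proposal is correct and follows essentially the same route as the paper: verify that the integral operator $K$ of \eqref{eq32} is continuous, maps bounded sets to bounded sets, and maps bounded sets to equicontinuous sets, then invoke Arzel\`a--Ascoli (Lemma~\ref{lem2.2}) and Schauder (Lemma~\ref{lem2.3 }). The paper states exactly these three lemmas and concludes immediately; you have in fact made explicit the self-mapping step $K(B_R)\subset B_R$ (which the paper leaves implicit in its ``bounded to bounded'' lemma) and correctly flagged the one genuine technical wrinkle---the applicability of Proposition~\ref{proposition252} to the singular increasing integrand $s\mapsto(t-s)^{2\alpha-1}$---that the paper does not comment on.
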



\subsection{Uniqueness}

We now derive uniqueness of solution to problem \eqref{eq12}.

\begin{theorem}[Uniqueness of solution]
\label{corollary2}
Let $ 0< \alpha < \frac{1}{2}$ and $f$ be a function
satisfying the hypothesis $(H_1)$. If
$$
0 < \lambda <  \left(
\frac{ T^{2 \alpha}L_{f}}{(c_{1}T)^{2}\Gamma (2 \alpha +1 )}
+ \frac{2 c_{2}^{2}T^{2(\alpha +1)}
L_{f}}{(c_{1}T)^{4}\Gamma (2 \alpha+1 )} \right)^{-1},
$$
then the solution predicted by Theorem~\ref{thm12} is unique.
\end{theorem}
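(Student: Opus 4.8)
The plan is to show that, under the stated smallness condition on $\lambda$, the operator $K$ of \eqref{eq32} is a contraction on $X=(C([0,T]),\|\cdot\|)$, so that the fixed point guaranteed by Theorem~\ref{thm12} is the only one. First I would take two arbitrary elements $u,v\in X$ and estimate $\|Ku-Kv\|$ pointwise. Writing $F(u):=\int_{0}^{T}f(u)\,\triangle x$, the difference $Ku(t)-Kv(t)$ splits, after adding and subtracting a mixed term, into one piece controlled by $|f(u(s))-f(v(s))|$ with denominator $F(u)^{2}$, and one piece controlled by $|F(u)^{2}-F(v)^{2}|=|F(u)-F(v)|\,|F(u)+F(v)|$ with numerator $f(v(s))$. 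For the first piece I would use the Lipschitz bound $|f(u(s))-f(v(s))|\le L_f|u(s)-v(s)|$ from $(H_1)$ together with $F(u)\ge c_1 T$; for the second, the elementary estimate $|F(u)-F(v)|\le L_f\int_0^T|u(x)-v(x)|\,\triangle x\le L_f T e^{NT}\|u-v\|$ (or simply $L_f T\|u-v\|_\infty$ if one works with the ordinary sup-norm, which is equivalent), $|F(u)+F(v)|\le 2c_2 T$, and $f(v(s))\le c_2$, again using $F(u),F(v)\ge c_1 T$ in the denominators so that $F(u)^2 F(v)^2\ge (c_1T)^4$.

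Next I would carry out the time integration. Since $0<\alpha<\frac12$ gives $2\alpha-1>-1$, the kernel $(t-s)^{2\alpha-1}$ is $\triangle$-integrable, and $\int_{0}^{t}(t-s)^{2\alpha-1}\triangle s\le \int_{0}^{t}(t-s)^{2\alpha-1}\,ds = t^{2\alpha}/(2\alpha)$ for $t\le T$ by Proposition~\ref{proposition252} (monotone extension of the integrand, after the usual substitution), hence is bounded by $T^{2\alpha}/(2\alpha)$. Collecting the constants, one obtains
\begin{equation*}
\|Ku-Kv\| \le \frac{\lambda}{\Gamma(2\alpha)}\left(\frac{T^{2\alpha}L_f}{2\alpha\,(c_1T)^2} + \frac{2c_2^2 T\cdot T^{2\alpha}}{2\alpha\,(c_1T)^4}\right)\|u-v\|,
\end{equation*}
which, using $2\alpha\,\Gamma(2\alpha)=\Gamma(2\alpha+1)$, is exactly
\begin{equation*}
\|Ku-Kv\| \le \lambda\left(\frac{T^{2\alpha}L_f}{(c_1T)^2\Gamma(2\alpha+1)} + \frac{2c_2^2 T^{2(\alpha+1)}L_f}{(c_1T)^4\Gamma(2\alpha+1)}\right)\|u-v\|.
\end{equation*}
The hypothesis $0<\lambda<\bigl(\cdots\bigr)^{-1}$ makes the bracketed quantity times $\lambda$ strictly less than $1$, so $K$ is a contraction on the Banach space $X$. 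By the Banach fixed point theorem $K$ has a unique fixed point, which must coincide with the solution produced by Schauder's theorem in Theorem~\ref{thm12}; hence that solution is unique.

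The main obstacle is the bookkeeping in the denominator: one must be careful that the nonlocal term $1/F(u)^2$ is Lipschitz in $u$ as a functional, and this is where the two-sided bound $c_1\le f\le c_2$ in $(H_1)$ is essential — it both keeps $F(u)$ bounded away from $0$ (so $1/F(u)^2$ is well-defined and bounded) and bounds $f(v(s))$ in the numerator of the second piece. A secondary technical point is the passage from the $\triangle$-integral over $[0,T]$ to an ordinary integral when estimating $|F(u)-F(v)|$ and $\int_0^t(t-s)^{2\alpha-1}\triangle s$: this is handled by Proposition~\ref{proposition252}, noting that $u-v$ need not be monotone, so one should apply the comparison to $|u(x)-v(x)|$ bounded above by the constant $\|u-v\|_\infty$ (whose $\triangle$-integral over $[0,T]$ is $T\|u-v\|_\infty$) rather than trying to extend $u-v$ itself. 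Everything else is a routine constant chase, and the weighted norm $\|x\|=\sup_t e^{-Nt}|x(t)|$ plays no essential role here beyond its equivalence with the sup-norm, so I would simply note that equivalence and work with whichever norm is cleaner.
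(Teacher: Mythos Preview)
Your proposal is correct and follows essentially the same route as the paper: show that the operator $K$ of \eqref{eq32} is a contraction on $X$ by splitting $Ku-Kv$ into a numerator piece and a denominator piece, bounding each via $(H_1)$, and then invoking the Banach fixed point theorem. The paper's own argument is extremely terse (it simply asserts that $K$ is a contraction and cites Banach's principle), so your write-up in fact supplies the details the paper omits; just fix the small bookkeeping slip in your intermediate display, where the second term should carry an $L_f$ and a $T^{2}$ (i.e.\ $2c_2^{2}L_f T^{2}\cdot T^{2\alpha}$) so that it actually matches your final line $2c_2^{2}T^{2(\alpha+1)}L_f$.
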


The map $ K : X \to X $ is a contraction. It follows by the Banach principle
that it has a fixed point $u=Fu$. Hence, there exists a unique
$u \in X$ solution of \eqref{eq22}.


\section{Conclusion}
\label{sec:conc}

We gave a survey on methods and results 
based on the reduction of FDEs to integral equations. 
As an example, we have reviewed the main results and proof techniques
of \cite{MR3736617,sidiammi1,MyID:365}.
The employed mathematical techniques are quite general and effective,
and can be used to cover a wide class of integral equations of fractional order.
We trust the theoretical results here reported will have a positive impact
on the development of computer methods for
integral equations of fractional order.


\begin{acknowledgement}
the authors are grateful to the R\&D
unit UID/MAT/04106/2013 (CIDMA)
and to two anonymous referees
for constructive comments.
\end{acknowledgement}



\end{document}